 \documentclass[a4paper,11pt]{article}

\usepackage{amsthm,amsmath,hyperref,geometry,color,stmaryrd,bbm}
\usepackage[utf8]{inputenc}
\usepackage{amssymb}
\usepackage[english]{babel}
\usepackage{graphicx}
\usepackage{amsfonts,amssymb}
\usepackage{verbatim}
\usepackage{enumitem}

\geometry{hscale=0.8,vscale=0.85,centering}

\newtheorem{thm}{Theorem}
\newtheorem{prop}[thm]{Proposition}

\newtheorem{lem}[thm]{Lemma}
\newtheorem{cor}[thm]{Corollary}

\newtheorem*{defi*}{Definition}
\newtheorem{assu}{Assumption}

\newcommand{\po}{\left(}
\newcommand{\pf}{\right)}

\newcommand{\cco}{\llbracket}
\newcommand{\ccf}{\rrbracket}
\newcommand{\R}{\mathbb R}
\newcommand{\N}{\mathbb N}

\newcommand{\dd}{\mathrm{d}}

\newcommand{\na}{\nabla}

\newcommand\ent[2]{\mathcal H \left(\left.#1\ \right|\ #2\right)}
\newcommand{\1}{\mathbbm{1}}

\title{Uniform long-time and propagation of chaos estimates for mean field kinetic particles in non-convex landscapes}
 \author{Arnaud Guillin, Pierre Monmarch\'e}

\begin{document}

\maketitle

\begin{abstract}
Combining the results of \cite{MonmarcheVFP} and \cite{GuillinWuZhang}, the trend to equilibrium in large time is studied for a large particle system associated to a Vlasov-Fokker-Planck equation. Under some conditions (that allow non-convex confining potentials) the convergence rate is proven to be independent from the number of particles. From this are derived uniform in time propagation of chaos estimates and an exponentially fast convergence for the nonlinear equation itself. 
\end{abstract}

\section{Introduction}

This work is devoted to the study of the long-time convergence of the solutions of the  Vlasov-Fokker-Planck equation, governing the evolution of the density of interact- ing and diffusive matter in the space of positions and velocities, and of the associated system of interacting particles, and of the convergence of the latter to the former as the number of particles increases. More precisely, following the notations of \cite{MonmarcheVFP}, the Vlasov-Fokker-Planck equation is
\begin{eqnarray}\label{EqVlasovFP}
\partial_t m_t +y \cdot \na_x m_t\   &  =& \na_y \cdot \po \frac{\sigma^2}{2} \na_y m_t + \po \int_{\R^d} \na_x U\po x, x' \pf   m_t(x',y') \dd x' \dd y' + \gamma y\pf m_t\pf
\end{eqnarray}
where $m_t(x,y)$ is a density at time $t$ of particles at point $x\in \R^d$ with velocity $y\in\R^d$, $d\in\mathbb N_*$, $\sigma,\gamma>0$, $\na$ and $\na\cdot$ stand for the gradient and divergence operators and the potential $U$ is a  $\mathcal C^1$ function from $\R^{2d}$  to $\R$ with $U(x,x')=U(x',x)$ for all $x,x'\in\R^d$. For $N\in\mathbb N_*$,  %The term $\int u m_t(u,v) \dd u \dd v$ means that a particle at point $x$ is  subjected to a force that depend on its position but also on the mean position of all the cloud of particle. It makes the equation semi-linear.
the associated system of $N$ interacting particles is the Markov process  $Z_N=(X_i,Y_i)_{i\in\cco 1,N\ccf}$ on $\R^{2dN}$ that solves the stochastic differential equation
\begin{eqnarray}\label{EqSystemparticul}
\forall i\in \cco 1,N\ccf & &\left\{\begin{array}{rcl}
\dd X_i & = & Y_i \dd t\\
\dd Y_i & = & - \gamma Y_i \dd t - \po \frac{1}{N}\underset{j=1}{\overset{N}\sum}  \na_x U \po X_i, X_j\pf \pf\dd t + \sigma \dd B_i 
\end{array} \right. 
\end{eqnarray} 
with the initial conditions $(X_i(0),Y_i(0))$ being i.i.d. random variables of law $m_0$, independent from the standard Brownian motion $B=(B_1,\dots,B_N)$ on $\R^{dN}$. As $N\rightarrow \infty$, one expects that the particles are approximately independent so that a Law of Large Number holds and the empirical law 
\[M_t^{N}=\frac1N \sum_{i\in \cco 1,N\ccf} \delta_{(X_i,Y_i)}\,,\]
which is a random probability measure on $\R^{2d}$, is close to the common law of the $(X_i,Y_i)$'s, whose evolution in time should thus approximately follow Equation \eqref{EqVlasovFP}. This is the so-called propagation of chaos phenomenon, as introduced by \cite{Kac} and further developped by \cite{Sznitman}. Rigorous statements  are provided below.

 The long-time behaviour of $m_t$ has been studied in various settings. Convergence to equilibrium without quantitative speed is addressed in \cite{TugautDuong}. Decomposing the potential $U(x,x') = V(x)+V(x')+W(x,x')$ where $V$ and $W$ are respectively called the confinement and interaction potentials, exponentially fast long-time convergence is established by perturbation of the linear case in \cite{Carrillo,HerauThomann}  when the interaction is sufficiently small. Such a quantitative result is also proven in \cite{BolleyGuillinMalrieu} when the potential is  close to a quadratic function, and in \cite{MonmarcheVFP} when $x\mapsto U(x,x')$ is stricly convex for all $x'$. Similarly to \cite{MonmarcheVFP}, in the present work, we will obtain the long-time convergence of $m_t$ from the long-time convergence of $m_t^{(N)}$ the law of $Z_N(t)$.
 
Indeed, remark that $Z_N$ is a classical Langevin diffusion, for which relaxation toward equilibrium has been addressed, under various assumptions on the potential,  in a broad number of works and with various techniques like Meyn-Tweedie or coupling probabilistic approaches \cite{Talay,EberleGuillinZimmer} or hypocoercive modified entropy methods \cite{Talay,Villani,DMS,CGMZ}, see also \cite{BaudoinGordinaHerzog} and within for more recent references. With respect to all this litterature, the specificities of \cite{MonmarcheVFP} that are relevant in the present mean-field framework are twofold: first, the long-time convergence has to be quantified in relative entropy (total variation distance or $L^2$ or $H^1$ norms would not be suitable for the limit $N\rightarrow \infty$) and, second, the convergence rate should be independent from $N$ (which is not the case for example in \cite{BaudoinGordinaHerzog}). From this, combined with crude propagation of chaos  estimates, long-time convergence is obtained in \cite{MonmarcheVFP} for the  non-linear limit equation \eqref{EqVlasovFP}, together with uniform in  time propagation of chaos estimates. It turns out that  there is mainly one step in \cite{MonmarcheVFP} where the convexity of the potential is crucially used, which is the proof that $m_\infty^{(N)}$  the invariant measure of $Z_N$ satisfies a log-Sobolev inequality with constant independent from $N$. However, in the recent \cite{GuillinWuZhang}, such a uniform inequality is proven for the invariant measure of the overdamped version of the system \eqref{EqSystemparticul}, which is exactly the $x$-marginal of $m_\infty^{(N)}$, under assumptions that allows non-convex potentiels but with superquadratic confinement. Since log-Sobolev inequalities are stable under tensorization and since such an inequality is clearly satisfied by the $y$-marginal of $m_\infty^{(N)}$, which is a Gaussian law, we are in position to extend the results of \cite{MonmarcheVFP} to a much broader class of potentials.  

The plan of the paper is quite simple. Section 2 will present the results and comparisons with existing results, while proofs are provided in Section 3. We now detail these results.% (the proofs are postponed to the next section).

\section{Results}

For $N\in\N_*$, denoting $\beta: = 2\gamma/\sigma^2$,  we consider the Gibbs measure with Hamiltonian
\[H_N(x,y) \ = \ \beta\po \frac{|y|^2}2 + U_N(x)\pf\,, \qquad \text{where } \qquad U_N(x) \ = \ \frac1{2N} \sum_{i=1}^N \sum_{j=1}^N U(x_i,x_j)\,, \]
namely the measure on $\R^{2dN}$  with Lebesgue density
\begin{eqnarray}\label{Eq:MesureInvar}
m_\infty^{(N)}(x,y) & =  &  \mathcal Z_N^{-1} \exp\po -  H_N(x,y)  \pf\,,\qquad \mathcal Z_N  := \int_{\R^{2dN}} \exp\po -  H_N(x,y)  \pf \dd x \dd y \,.
\end{eqnarray}
In all the paper we denote identically a probability density and the corresponding probability measure.

\begin{assu}\label{Hyp:concret}
The potential $U$ is given by $U(x,x') = V(x)+V(x')+W(x,x')$ where $V\in\mathcal C^\infty( \R^d)$ and $W\in\mathcal C^\infty( \R^d\times\R^d)$  with all their derivatives of order larger than 2 bounded. There exist $c_U>0$, $c_U',c_W',R\geqslant 0$ and $c_W\in\R$  such that for all $x,y,z\in\R^d$,
\begin{eqnarray}\label{Eq:hypV}
\po \na V(x) - \na V(y) \pf \cdot   (x-y) & \geqslant & c_V |x-y|^2 - c_V' |x-y| \1_{\{|x-y|\leqslant R\}}\\
\po \na_x W(x,z) - \na_y W(y,z) \pf \cdot   (x-y) & \geqslant & c_W |x-y|^2 - c_W' |x-y| \1_{\{|x-y|\leqslant R\}}\label{Eq:hypW}
\end{eqnarray}
 Moreover, $U$ is the sum of a strictly convex function and of a bounded function, $W$ is lower bounded, $c_V+c_W > \|\na^2_{x,x'} W\|_\infty$ and  $\beta <\beta_0$ where
 \[\beta_0 \  := \  \frac{4}{(c_V'+c_W')R} \ln\po \frac{c_V+c_W}{\|\na^2_{x,x'} W\|_\infty}\pf \qquad  (:=\ +\infty\text{ if }(c_V'+c_W')R=0).\]
\end{assu}

Remark that Assumption \ref{Hyp:concret} discards singular potentials such as considered in \cite{BaudoinGordinaHerzog}. Indeed, we focus here on the question of having uniform estimates (in $t$ when $N\rightarrow \infty$ or in $N$ when $t\rightarrow \infty$) in non-convex cases, which is already interesting and new in cases where $U$ is smooth with bounded derivatives.

 We say a probability measure $\mu$ satisfies a log-Sobolev inequality with constant $\eta>0$ if
\begin{eqnarray}\label{EqLogSob}
\forall f>0\ s.t.\ \int f\dd \mu = 1,\hspace{40pt} \int f\ln f\dd \mu  & \leq & \eta \int  \frac{|\na f|^2}{f} \dd \mu.
\end{eqnarray}

\begin{prop}\label{Prop-logSob}
Under Assumption \ref{Hyp:concret}, there exists $\eta>0$ such that for all $N\in\N_*$, $\mathcal Z_N <+\infty$ and $m_\infty^{(N)}$ satisfies a log-Sobolev inequality with constant $\eta$. %Moreover, Equation \ref{EqVlasovFP} admits a unique equilibrium $m_\infty \in \mathcal P(\R^{2d})$  that satisfies a log-Sobolev inequality with constant $\eta$.
\end{prop}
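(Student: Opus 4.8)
The plan is to reduce the claim to the log-Sobolev inequality for the $x$-marginal of $m_\infty^{(N)}$, which is precisely the invariant measure of the overdamped Langevin dynamics with potential $U_N$, and then invoke the result of \cite{GuillinWuZhang}. First I would observe that $m_\infty^{(N)}$ factorizes as a product of its $x$-marginal $\mu_\infty^{(N)}(x) = \mathcal Z_N^{-1} (2\pi/\beta)^{dN/2} \exp(-\beta U_N(x))$ and its $y$-marginal, which is the centered Gaussian $\mathcal N(0,\beta^{-1}I_{dN})$ on $\R^{dN}$. Since a log-Sobolev inequality tensorizes with the same constant (the LSI constant of a product measure is the maximum of the constants of the factors), it suffices to establish an $N$-independent LSI for each marginal. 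The Gaussian $y$-marginal satisfies an LSI with constant $\beta^{-1}$, independent of $N$ (this is the classical Gaussian LSI, whose constant depends only on the variance, not the dimension).

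Next I would turn to the $x$-marginal $\mu_\infty^{(N)} \propto \exp(-\beta U_N(x))$. Here I would check that the hypotheses of Assumption \ref{Hyp:concret} are exactly, or can be straightforwardly rephrased as, the assumptions under which \cite{GuillinWuZhang} proves a uniform-in-$N$ LSI for the invariant measure of the mean-field overdamped system. Concretely, $U_N(x) = \sum_i V(x_i) + \frac{1}{2N}\sum_{i,j} W(x_i,x_j)$ is the mean-field Hamiltonian; the one-body confinement condition \eqref{Eq:hypV}, the two-body condition \eqref{Eq:hypW}, the decomposition of $U$ into strictly convex plus bounded, the lower bound on $W$, the smallness condition $c_V + c_W > \|\na^2_{x,x'} W\|_\infty$, and the temperature restriction $\beta < \beta_0$ together match the framework of that paper (the condition $\beta<\beta_0$ is precisely what makes the Holley--Stroock / bounded-perturbation argument combined with the convexity-at-infinity estimate close up uniformly in $N$). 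Along the way the finiteness $\mathcal Z_N < +\infty$ follows from the coercivity of $U_N$ implied by \eqref{Eq:hypV}--\eqref{Eq:hypW} and the lower bound on $W$, since these force $U_N(x) \to +\infty$ as $|x|\to\infty$ with at least quadratic growth up to a bounded correction.

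Finally, combining the $N$-independent LSI constant $\eta_x$ for $\mu_\infty^{(N)}$ from \cite{GuillinWuZhang} with the constant $\beta^{-1}$ for the Gaussian factor, tensorization yields that $m_\infty^{(N)}$ satisfies an LSI with constant $\eta := \max(\eta_x, \beta^{-1})$, which does not depend on $N$. The main obstacle I anticipate is not conceptual but bookkeeping: one must verify carefully that the precise list of constants and inequalities in Assumption \ref{Hyp:concret} really does imply the hypotheses of the external result (possibly with a change of notation, or after absorbing the inverse temperature $\beta$ into the potential so that the effective confinement and interaction constants become $\beta c_V$, $\beta c_W$, etc.), and in particular that the quantitative threshold $\beta_0$ is exactly the one needed for the uniform estimate to hold rather than merely a sufficient condition of a possibly worse form. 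If \cite{GuillinWuZhang} is stated at a fixed temperature, one rescales; if it already tracks the temperature dependence, one simply matches constants.
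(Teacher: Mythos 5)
Your proposal follows essentially the same route as the paper: factorize $m_\infty^{(N)}$ into its $x$- and $y$-marginals, use the Gaussian log-Sobolev inequality together with tensorization for the velocity part, reduce to a uniform-in-$N$ log-Sobolev inequality for $\pi_\infty^{(N)}\propto e^{-\beta U_N}$ via \cite[Theorem 8]{GuillinWuZhang}, and deduce $\mathcal Z_N<\infty$ from the quadratic coercivity of $U_N$. One wiring error worth fixing, though. The threshold $\beta<\beta_0$ is \emph{not} what makes the Holley--Stroock argument close up; it is what guarantees Zegarlinski's smallness condition $\gamma_0<1$ in Assumption~\ref{Hyp:GWZ-2}. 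The uniform one-particle conditional log-Sobolev inequality (Assumption~\ref{Hyp:GWZ-3}) is a separate hypothesis of the cited theorem, and checking it here is not mere bookkeeping: in \cite{GuillinWuZhang} that hypothesis is only verified under superconvex confinement ($\nabla^2 V\to+\infty$), which is incompatible with the bounded-derivative requirement in Assumption~\ref{Hyp:concret}. The paper therefore supplies its own proof of Assumption~\ref{Hyp:GWZ-3}, writing $U=U_1+U_2$ with $U_1$ $\rho$-convex and $U_2$ bounded, and combining Bakry--\'Emery with Holley--Stroock to get a conditional constant $e^{2\|U_2\|_\infty}/\rho$ uniform in $N$ and in the frozen coordinates. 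Your sketch has all the right ingredients (strict-convex-plus-bounded decomposition, $\beta<\beta_0$, perturbation argument) but attaches them to the wrong hypotheses of \cite{GuillinWuZhang}; once that is sorted out, it matches Lemma~\ref{Lem:AssuGWZ} and the argument of Subsection~\ref{Subsec:proof_logsob}.
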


Note that logarithmic Sobolev inequalities have direct consequences that may be useful beyond the convergence to equilibrium we look at in this paper. For example it entails uniform in the number of particles Gaussian concentration inequalities for the measure $m^{(N)}_\infty$. Another important consequence of a logarithmic Sobolev inequaliy is that it implies a Talagrand inequality. It will enable us to pass from entropic convergence to equilibrium to Wasserstein convergence to equilibrium. Let us detail this.
%The proof is postponed to Section \ref{Subsec:proof_logsob}.
%\begin{assu}\label{Hypo:gene}
%The potential $U$ is smooth with all its derivatives of order larger than 2 bounded. There exists $\eta>0$ such that for all $N\in\N_*$, $\mathcal Z_N <+\infty$ and $m_\infty^{(N)}$ satisfies a log-Sobolev inequality with constant $\eta$. 
%\end{assu}

For $\mu$ and $\nu$ two probability laws on some Polish space $E$, we write
\[\ent{\nu}{\mu} = \left\{ \begin{array}{ll}
\int_E \ln \po \frac{\dd \nu}{\dd \mu}\pf \dd \nu & \text{if }\nu \ll \mu\\
+\infty & \text{else}
\end{array}\right.\]
the relative entropy of $\nu$ with respect to $\mu$ and
\begin{eqnarray*}
\mathcal W_2 \po \mu,\nu\pf &=&  \inf_{\pi\in\Gamma(\mu,\nu)} \left\{\sqrt{\mathbb E\po|A_1-A_2|^2\pf},\ Law(A_1,A_2)= \pi \right\}
\end{eqnarray*}
their $\mathcal W_2$-Wasserstein distance, where the infimum is taken over the set $\Gamma(\mu,\nu)$ of transference plan between $\mu$ and $\nu$, namely the set of probability laws on $E\times E$ with marginals $\mu$ and $\nu$. Recall that the set $\mathcal P_2(\R^d)$ of probability measures on $\R^d$ that have a finite second moment, endowed with the distance $\mathcal W_2$, is complete. Similarly, denote
\begin{eqnarray*}
\| \mu-\nu\|_{TV} &=&  \inf_{\pi\in\Gamma(\mu,\nu)} \left\{ \mathbb P\po A_1\neq A_2\pf ,\ Law(A_1,A_2)= \pi \right\}
\end{eqnarray*}
the total variation norm of $\mu-\nu$. Recall Pinsker's Inequality
\[\|\mu-\nu\|_{TV}^2 \ \leqslant \ 2 \mathcal H \po \mu\ |\ \nu\pf\]
for all  $\mu,\nu\in \mathcal P(E)$, and Talagrand's $T_2$ Inequality
\[\mathcal W_2^2(\mu,\nu) \ \leqslant \ \eta \mathcal H \po \mu\ |\ \nu\pf\]
that holds for all $\mu\in \mathcal P(E)$ if $\nu$ satisfies a log-Sobolev inequality with constant $\eta$, see \cite{OV}.

Under Assumption \ref{Hyp:concret}, \eqref{EqSystemparticul} admits a strong solution  $Z_N=\po (X_i,Y_i)\pf_{i\in\cco 1,N\ccf}$  for any initial condition  (see \cite{Meleard96}). Denote $m_t^{(N)}$ the law of $Z_N(t)$.% with initial condition $m_0^{(N)}=m_0^{\otimes N}$. 

\begin{thm}\label{TheoLine}
Under Assumption \ref{Hyp:concret}, there exist $C>1,\chi>0$ that depend only on $U,\gamma,\sigma$ such that for all $N\in\N_*$, $t\geq 0$ and all initial condition $m_0^{(N)}\in \mathcal P(\R^{2dN})$ %and $m_0\in\mathcal P(\R^{2d})$,
\begin{eqnarray}
\ent{m_t^{(N)}}{m_\infty^{(N)}} & \leqslant & Ce^{-\chi t} \ent{m_0^{(N)}}{m_\infty^{(N)}}\label{Eq:EntropieLine}\\
\ent{m_t^{(N)}}{m_\infty^{(N)}} & \leqslant & \frac{C}{(1\wedge t)^3} \mathcal W_2^2 \po m_0^{(N)},m_\infty^{(N)}\pf\label{Eq:RegulEntropie}\\
 \mathcal W_2 \po m_t^{(N)},m_\infty^{(N)}\pf & \leqslant & Ce^{-\chi t}  \mathcal W_2 \po m_0^{(N)},m_\infty^{(N)}\pf \label{Eq:W2Line}  \,.
\end{eqnarray}
\end{thm}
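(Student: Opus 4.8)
The plan is to follow \cite{MonmarcheVFP}: since $Z_N$ is the Langevin diffusion associated with the potential $U_N$, for which $m_\infty^{(N)}$ in \eqref{Eq:MesureInvar} is the invariant law, the three estimates will follow from the hypocoercive entropy method, the only place the convexity of $U$ was used in \cite{MonmarcheVFP} being the uniform log-Sobolev inequality now provided by Proposition \ref{Prop-logSob}. So I would first record the two structural facts that keep all constants $N$-free. Proposition \ref{Prop-logSob} gives a log-Sobolev constant $\eta$ independent of $N$. And Assumption \ref{Hyp:concret} forces $\|\na^2 U_N\|_\infty \leq K$ with $K$ independent of $N$: $\na^2 U$ is bounded (all derivatives of $V,W$ of order $\geq 2$ are), and writing $\na_{x_i} U_N(x)=\frac1N\sum_{j=1}^N \na_x U(x_i,x_j)$ — using $U(x,x')=U(x',x)$ — the Hessian of $U_N$ has diagonal $d\times d$ blocks that are averages of $\na^2_{x,x}U$ and off-diagonal blocks equal to $\frac1N\na^2_{x,x'}U$, so a Cauchy--Schwarz bound on the operator norm gives $K\lesssim\|\na^2_{x,x}U\|_\infty+\|\na^2_{x,x'}U\|_\infty$. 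Consequently the drift of \eqref{EqSystemparticul} is globally Lipschitz with a constant $L$ depending only on $K$ and $\gamma$, uniformly in $N$.

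With these inputs, \eqref{Eq:EntropieLine} and \eqref{Eq:RegulEntropie} are the conclusions of \cite{MonmarcheVFP} applied to \eqref{EqSystemparticul}. For \eqref{Eq:EntropieLine} one uses the modified free energy $\mathcal E_t=\ent{m_t^{(N)}}{m_\infty^{(N)}}+\int\na\log h_t\cdot A\,\na\log h_t\,\dd m_t^{(N)}$, with $h_t=\dd m_t^{(N)}/\dd m_\infty^{(N)}$ and $A$ a fixed positive-definite $2d\times2d$ matrix acting block-wise on $\R^{2dN}$, so that the computation is insensitive to the dimension $dN$; one shows $\partial_t\mathcal E_t\leq-\lambda\mathcal E_t$ and that $\mathcal E_t$ is comparable to $\ent{m_t^{(N)}}{m_\infty^{(N)}}$, the comparison using the log-Sobolev inequality to control the Fisher-information correction, with $\lambda$ depending only on $K,\eta,\gamma,\sigma$. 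For \eqref{Eq:RegulEntropie} one uses the hypoelliptic (Girsanov) regularization of \cite{MonmarcheVFP}: couple a solution started from $m_0^{(N)}$ with a controlled solution started from $m_\infty^{(N)}$, from an optimal $\mathcal W_2$ coupling at time $0$, adding to the latter a control force driving the position--velocity gap $(\delta x,\delta y)$ to $0$ by time $t$; since for a kinetic diffusion producing a displacement $\delta x$ over a time $t\leq1$ costs a control force of order $|\delta x|/t^2$, the Girsanov energy is of order $t^{-3}|\delta x|^2+t^{-1}|\delta y|^2$, and $\ent{m_t^{(N)}}{m_\infty^{(N)}}$ is bounded by its expectation, hence by $Ct^{-3}\mathcal W_2^2(m_0^{(N)},m_\infty^{(N)})$; for $t\geq1$ one concludes from the monotonicity of $s\mapsto\ent{m_s^{(N)}}{m_\infty^{(N)}}$. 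All constants here are quantitative in $K,\gamma,\sigma$ only.

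Finally, \eqref{Eq:W2Line} follows by combining the first two estimates with Talagrand's $T_2$ inequality, valid for $m_\infty^{(N)}$ by Proposition \ref{Prop-logSob} and \cite{OV}; we may assume $\mathcal W_2(m_0^{(N)},m_\infty^{(N)})<\infty$. For $t\leq1$, a synchronous coupling of two solutions of \eqref{EqSystemparticul} — same Brownian motion, initial law an optimal $\mathcal W_2$ coupling of $m_0^{(N)}$ and $m_\infty^{(N)}$ — and Grönwall's lemma, using the Lipschitz bound on the drift, give $\mathcal W_2(m_t^{(N)},m_\infty^{(N)})\leq e^{Lt}\mathcal W_2(m_0^{(N)},m_\infty^{(N)})\leq e^L\mathcal W_2(m_0^{(N)},m_\infty^{(N)})$. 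For $t\geq1$, applying \eqref{Eq:RegulEntropie} at time $1$, then \eqref{Eq:EntropieLine} with $m_1^{(N)}$ as initial condition (semigroup property), then the $T_2$ inequality,
\[\mathcal W_2^2\po m_t^{(N)},m_\infty^{(N)}\pf\ \leq\ \eta\,\ent{m_t^{(N)}}{m_\infty^{(N)}}\ \leq\ \eta C e^{-\chi(t-1)}\ent{m_1^{(N)}}{m_\infty^{(N)}}\ \leq\ \eta C^2 e^{\chi}\,e^{-\chi t}\,\mathcal W_2^2\po m_0^{(N)},m_\infty^{(N)}\pf\,.\]
Taking square roots, halving $\chi$ (which only weakens \eqref{Eq:EntropieLine} and leaves \eqref{Eq:RegulEntropie} unchanged) and enlarging $C$ to absorb $e^L$ and $\sqrt\eta\,Ce^{\chi/2}$ gives \eqref{Eq:W2Line}. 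The real obstacle throughout is the dimension-freeness in the middle step: one must make sure that neither the hypocoercivity constants of \cite{MonmarcheVFP} nor the Girsanov regularization constant secretly depend on $dN$ — which is why the method is arranged so that $A$ acts block-wise, and why the uniform input that is genuinely needed is a log-Sobolev constant rather than a Poincaré constant on $\R^{2dN}$, namely Proposition \ref{Prop-logSob}.
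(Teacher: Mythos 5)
Your proposal is correct and follows essentially the same route as the paper: uniform log-Sobolev constant (Proposition \ref{Prop-logSob}) plus a uniform bound on $\|\na^2 U_N\|_\infty$ feed a hypocoercive entropy estimate for \eqref{Eq:EntropieLine}, a short-time Wasserstein-to-entropy regularization bound for \eqref{Eq:RegulEntropie}, and then \eqref{Eq:W2Line} by combining the first two with Talagrand's $T_2$ for $t\geqslant 1$ and a synchronous coupling with Grönwall for $t\leqslant 1$. The only cosmetic difference is in the black boxes invoked: the paper cites Theorem 10 of \cite{MonmarcheGamma} (a commutator-condition formulation of hypocoercivity) for \eqref{Eq:EntropieLine} and Corollary 4.7 of \cite{GuillinWang} for \eqref{Eq:RegulEntropie}, whereas you re-describe the underlying mechanisms (Villani-type modified entropy with a block-wise matrix $A$, and the Girsanov control argument of \cite{MonmarcheVFP}); in both cases the decisive point you correctly emphasize is that the constants depend only on $\eta$, $\|\na^2 U_N\|_\infty$, $\gamma$, $\sigma$ and hence not on $N$.
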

Remark that the fact $C>1$ is of course necessary here. If not then \eqref{Eq:EntropieLine} would imply back a logarithmic Sobolev inequality for $m^{(N)}_\infty$ with the Dirichlet form given by the dynamic  \eqref{EqSystemparticul}, which is false since this Dirichlet form is degenerate. Such results thus being not coercive are named hypocoercive. Note that under weaker conditions, such a result was given in $L^2/H^1$ in \cite{GLWZ} also independent of the number of particles.

%In the following, we are interested in initial conditions of the form $m_0^{(N)} = m_0^{\otimes N}$ for some law $m_0$ on $\R^{2d}$.

To study the mean-field equation \eqref{EqVlasovFP}, following the notations of \cite{GuillinWuZhang}, we consider $\alpha$ the probability measure with Lebesgue density proportional to $\exp(-V(x)-|y|^2/2)$ and denote
\[E_f(\nu) \ = \ \mathcal H(\nu\ |\ \alpha) + \frac12\int W(x,x') \nu(\dd x)\nu(\dd x')\]
the so-called free energy of any $\nu \in\mathcal P(\R^{2d})$ and
\[\mathcal H_{W}(\nu) \ =  \ E_f(\nu) - \min_{\mu\in\mathcal P(\R^d)} E_f(\mu)\]
the corresponding mean-field entropy.

\begin{thm}\label{TheoNonLine}
Under Assumption \ref{Hyp:concret}, $E_f$ admits a unique minimizer $m_\infty\in\mathcal P(\R^d)$. Moreover, there exist $C,\chi>0$ that depend only on $U,\gamma,\sigma$ such that for all $t\geq 0$ and all initial condition $m_0\in\mathcal P(\R^{2d})$,
\begin{eqnarray}
\mathcal H_W(m_t) & \leqslant & Ce^{-\chi t} \mathcal H_W(m_0)\label{Eq:EntropieNonLin} \\
\mathcal H_W(m_t)  & \leqslant & \frac{C}{(1\wedge t)^3} \mathcal W_2^2 \po m_t ,m_\infty \pf \label{Eq:RegulNonLin}\\
  \mathcal W_2 \po m_t,m_\infty\pf & \leqslant & Ce^{-\chi t}  \mathcal W_2 \po m_0 ,m_\infty \pf \,.
\end{eqnarray}
\end{thm}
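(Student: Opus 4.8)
The plan is to transfer the linear estimates of Theorem~\ref{TheoLine} to the nonlinear equation \eqref{EqVlasovFP} by the classical route: identify the nonlinear flow with the limit of the $N$-particle system, use the $N$-uniform rates, and pass to the limit. First I would record the probabilistic representation: under Assumption~\ref{Hyp:concret} the nonlinear McKean--Vlasov SDE associated with \eqref{EqVlasovFP} is well-posed (the drift is Lipschitz up to a bounded perturbation, by the bounded-second-derivatives hypothesis), so $m_t$ is the law of a nonlinear diffusion, and standard propagation of chaos (Sznitman-type coupling, as in \cite{Sznitman,Meleard96}) gives, for each fixed horizon $T$, $\mathcal W_2^2\big(m_t,m_t^{(1,N)}\big)\leqslant C_T/N$ where $m_t^{(1,N)}$ is the first-marginal of $m_t^{(N)}$ started from the tensorized law $m_0^{\otimes N}$. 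This is only a finite-time estimate, but it is all that is needed to identify limits.

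Next I would establish the three displayed inequalities. For \eqref{Eq:EntropieNonLin}, the key identity is that $\mathcal H_W(m_t) = \lim_{N\to\infty}\tfrac1N\ent{m_t^{(N)}}{m_\infty^{(N)}}$ when $m_0^{(N)}=m_0^{\otimes N}$; this is exactly the link exploited in \cite{MonmarcheVFP}, combining subadditivity of entropy, the law-of-large-numbers behaviour of $U_N$ under $m_t^{(N)}$, and the definition of the free energy $E_f$ and of the mean-field entropy $\mathcal H_W$. Given this, dividing \eqref{Eq:EntropieLine} by $N$ and letting $N\to\infty$ yields $\mathcal H_W(m_t)\leqslant Ce^{-\chi t}\mathcal H_W(m_0)$ with the \emph{same} constants, since $C,\chi$ are $N$-independent; one must check the convergence holds at both times $0$ and $t$ and that $\tfrac1N\ent{m_0^{\otimes N}}{m_\infty^{(N)}}\to \mathcal H_W(m_0)$, which requires a short argument for the reverse inequality (an upper bound via a tensorized trial measure). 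The regularisation estimate \eqref{Eq:RegulNonLin} follows in the same way from \eqref{Eq:RegulEntropie}, using additionally that $\tfrac1N\mathcal W_2^2(m_0^{\otimes N},m_\infty^{(N)})$ is controlled: here one writes $m_\infty^{(N)}$ is close (in $\mathcal W_2$, normalised by $N$) to $m_\infty^{\otimes N}$ by the same propagation-of-chaos-at-equilibrium estimate, so $\tfrac1N\mathcal W_2^2(m_0^{\otimes N},m_\infty^{(N)})\to \mathcal W_2^2(m_0,m_\infty)$. For the Wasserstein contraction, I would combine \eqref{Eq:RegulNonLin} applied at a fixed small time $t_0$ (to leave the degenerate short-time regime), Talagrand's $T_2$ inequality for $m_\infty$ — which holds because $m_\infty$, being the minimiser, satisfies a log-Sobolev inequality, itself obtained as an $N\to\infty$ limit of Proposition~\ref{Prop-logSob} via the tensorisation/subadditivity of LSI or directly from \cite{GuillinWuZhang} — and then the entropic decay \eqref{Eq:EntropieNonLin}: schematically $\mathcal W_2^2(m_t,m_\infty)\lesssim \mathcal H_W(m_t)\lesssim e^{-\chi t}\mathcal H_W(m_{t_0})\lesssim e^{-\chi t}\mathcal W_2^2(m_{t_0},m_\infty)\lesssim e^{-\chi t}\mathcal W_2^2(m_0,m_\infty)$, the last step using the non-expansiveness of the nonlinear flow in $\mathcal W_2$ up to a finite multiplicative constant on $[0,t_0]$ (Grönwall on the coupling). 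Uniqueness of the minimiser $m_\infty$ follows since two minimisers would both be stationary and the contraction forces them to coincide.

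The main obstacle I anticipate is the rigorous justification of the entropic limit $\tfrac1N\ent{m_t^{(N)}}{m_\infty^{(N)}}\to \mathcal H_W(m_t)$, which is not merely soft: the upper bound (constructing near-optimal tensorised competitors and controlling $\log\mathcal Z_N/N$, i.e.\ the convergence of the free energy per particle to $\min E_f$) needs the uniform integrability afforded by Assumption~\ref{Hyp:concret} (growth of $V$, lower bound and bounded Hessian of $W$), and the lower bound needs a uniform-in-$N$ moment bound on $m_t^{(N)}$ to pass the interaction term to the limit. I would handle the moment bounds by a Lyapunov/Grönwall argument on $\tfrac1N\sum \mathbb E(|X_i|^2+|Y_i|^2)$ uniform in $N$ and $t$, using the confinement from \eqref{Eq:hypV}. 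Everything else — the finite-time propagation of chaos, Talagrand from LSI, the short-time $\mathcal W_2$ comparison — is standard and can be quoted or done with a one-line Grönwall estimate.
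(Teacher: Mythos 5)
Your general strategy — run the uniform-in-$N$ linear estimates of Theorem~\ref{TheoLine} through the $N\to\infty$ limit using the entropic and Wasserstein identifications from \cite{GuillinWuZhang}/\cite{MonmarcheVFP} — is exactly what the paper does for \eqref{Eq:EntropieNonLin} and \eqref{Eq:RegulNonLin}. Two remarks: (i) you do not need (and should not claim) the \emph{equality} $\mathcal H_W(m_t)=\lim_N\frac1N\ent{m_t^{(N)}}{m_\infty^{(N)}}$ for $t>0$; since $m_t^{(N)}$ is not a product measure, what is available (and what the argument actually uses) is the one-sided bound $\mathcal H_W(m_t)\leqslant\liminf_N\frac1N\ent{m_t^{(N)}}{m_\infty^{(N)}}$ from the proof of \cite[Theorem 10]{GuillinWuZhang}, together with the exact limit $\frac1N\ent{m_0^{\otimes N}}{m_\infty^{(N)}}\to\mathcal H_W(m_0)$ at the product initial datum from \cite[Lemma 17]{GuillinWuZhang}. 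This one-sided inequality is precisely the nontrivial half and suffices. (ii) Existence and uniqueness of the minimiser $m_\infty$ must be established \emph{before} the displayed estimates, since they are formulated in terms of $m_\infty$; the paper quotes \cite[Lemma 21]{GuillinWuZhang}, whereas your contraction argument only gives uniqueness once existence and the contraction are known, so it is circular as written.

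The genuine gap is in your route to the Wasserstein contraction. You propose $\mathcal W_2^2(m_t,m_\infty)\lesssim\mathcal H_W(m_t)$ via ``Talagrand's $T_2$ inequality for $m_\infty$''. But $T_2$ for $m_\infty$ controls $\mathcal W_2^2(m_t,m_\infty)$ by $\mathcal H(m_t\,|\,m_\infty)$, and the paper explicitly points out that $\mathcal H_W(m_t)\ne\mathcal H(m_t\,|\,m_\infty)$ — the latter is only the linearisation of the former at $m_\infty$, and there is no a priori order between them in the nonconvex regime considered here. So that step does not follow from a standard $T_2$ inequality; one would need a ``mean-field Talagrand inequality'' bounding $\mathcal W_2^2(\cdot,m_\infty)$ by $\mathcal H_W(\cdot)$, which is a separate result (indeed available in \cite{GuillinWuZhang}, but not what you invoke). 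The paper sidesteps this entirely: it establishes the $\mathcal W_2$ contraction by dividing the $N$-particle estimate \eqref{Eq:W2Line} by $\sqrt N$, combining it with the finite-time propagation-of-chaos bound (Proposition~\ref{prop:chaosW2}), the weak convergence $m_\infty^{(1,N)}\to m_\infty$, and the lower semicontinuity bound $\mathcal W_2^2(\nu,m_\infty)\leqslant\liminf_N\frac1N\mathcal W_2^2(\nu^{\otimes N},m_\infty^{(N)})$ (applied to $\nu=m_0$ and combined with the $T_2$ inequality for $m_\infty^{(N)}$ to control $\frac1N\mathcal W_2^2(m_\infty^{\otimes N},m_\infty^{(N)})\to 0$). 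You should either take that route, or explicitly invoke the mean-field $T_2$ inequality from \cite{GuillinWuZhang} rather than the classical one.
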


Remark that $m_\infty$ is necessarilly an equilibrium of \eqref{EqVlasovFP}, and thus it solves
\[m_\infty(x,y) \ \propto \ \exp\po - \beta \po V(x) + \frac12|y|^2 +  \int_{\R^d} W(x,x') m_\infty(x',y')\pf\pf\,.\]
In particular, the mean-field entropy $\mathcal H_W(\nu)$ differs from $\mathcal H(\nu|m_\infty)$ since, up to an additive constant, the first one is 
\[\int_{\R^{2d}} \nu \ln \nu + \int_{\R^{2d}} V \nu + \frac12 \int_{\R^{2d}\times\R^{2d}} W \nu \otimes \nu\]
while, up to an additive constant, the second one is 
\[\int_{\R^{2d}} \nu \ln \nu + \int_{\R^{2d}} V \nu + \int_{\R^{2d}\times\R^{2d}} W \nu \otimes m_\infty\,,\]
i.e. is the linearization  of the first one at $\nu=m_\infty$.

\bigskip

What is available in practice is the empirical distribution $M_t^N$ for finite 
$t\geqslant 0$ and $N\in\N_*$.

\begin{cor}\label{CorPW2}
Under Assumption \ref{Hyp:concret}, there exist $\chi>0$ that depends only on $U,\gamma,\sigma$ such that for  all initial condition $m_0^{(N)}=m_0^{\otimes N}$ with $m_0\in\mathcal P_2(\R^{2d})$, there exists $K>0$ such that for all $N\in\N_*$ and $t\geq 0$ ,
\begin{eqnarray*}
\mathbb E\po \mathcal W_2^2\po M_t^N , m_\infty \pf \pf & \leqslant &  K\po e^{-\chi t} +a(N) \pf
\end{eqnarray*}
where 
\[a(N) \ = \ \left\{ \begin{array}{ll}
N^{-1/2} & \text{if }d=1\\
 \ln(1+N) N^{-1/2} & \text{if } d=2\\
N^{-2/d} & \text{if } d\geqslant 3\,.
\end{array}\right.\]
\end{cor}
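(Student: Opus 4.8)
The plan is to compare $M_t^N$ with the empirical measure $\bar M^N:=\frac1N\sum_{i\in\cco 1,N\ccf}\delta_{\bar Z_i}$ of a sample $\bar Z_1,\dots,\bar Z_N$ of i.i.d.\ random variables of law $m_\infty$, writing, via the triangle inequality for $\mathcal W_2$ and $(a+b)^2\leqslant2a^2+2b^2$,
\[\E\po\mathcal W_2^2\po M_t^N,m_\infty\pf\pf\ \leqslant\ 2\,\E\po\mathcal W_2^2\po M_t^N,\bar M^N\pf\pf+2\,\E\po\mathcal W_2^2\po\bar M^N,m_\infty\pf\pf.\]
The second term only concerns the speed of approximation of the \emph{fixed} measure $m_\infty$ by the empirical measure of i.i.d.\ samples drawn from it; since $m_\infty$ is explicit (a Gaussian in $y$, times a density in $x$, by the equilibrium equation) and has finite moments of all orders, the Fournier--Guillin estimates on empirical measures give $\E(\mathcal W_2^2(\bar M^N,m_\infty))\leqslant K'a(N)$ — the product structure $m_\infty=m_\infty^x\otimes g$, $g$ Gaussian, may moreover be used to handle the position and velocity components separately, which is reflected in the precise form of $a(N)$.

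For the first term the dynamics enters. Realising $Z_N(t)\sim m_t^{(N)}$ and $\bar Z=(\bar Z_1,\dots,\bar Z_N)\sim m_\infty^{\otimes N}$ (both in $\mathcal P_2(\R^{2dN})$) through an optimal $\mathcal W_2$-coupling on $\R^{2dN}$ and pairing the $i$-th particle of $M_t^N$ with the $i$-th particle of $\bar M^N$, one gets
\[\E\po\mathcal W_2^2\po M_t^N,\bar M^N\pf\pf\ \leqslant\ \frac1N\,\E\po|Z_N(t)-\bar Z|^2\pf\ =\ \frac1N\,\mathcal W_2^2\po m_t^{(N)},m_\infty^{\otimes N}\pf,\]
and by the triangle inequality the right-hand side is $\leqslant\frac2N\mathcal W_2^2(m_t^{(N)},m_\infty^{(N)})+\frac2N\mathcal W_2^2(m_\infty^{(N)},m_\infty^{\otimes N})$. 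The first of these two terms is handled through \eqref{Eq:W2Line}, $\mathcal W_2^2(m_t^{(N)},m_\infty^{(N)})\leqslant C^2e^{-2\chi t}\mathcal W_2^2(m_0^{\otimes N},m_\infty^{(N)})$, together with the a priori bound $\mathcal W_2^2(m_0^{\otimes N},m_\infty^{(N)})\leqslant 2N\int_{\R^{2d}}|z|^2\dd m_0+2\int_{\R^{2dN}}|z|^2\dd m_\infty^{(N)}\leqslant C_1(m_0)N$, whose only non-elementary ingredient is the uniform-in-$N$ estimate $\int_{\R^{2dN}}|z|^2\dd m_\infty^{(N)}\leqslant C_0N$: this follows from the stationarity identity $\E_{m_\infty^{(N)}}[\na H_N\cdot z]=2dN$ (integration by parts), the fact that the $y$-marginal of $m_\infty^{(N)}$ is the centered Gaussian $\mathcal N(0,\beta^{-1}\mathrm{Id})^{\otimes dN}$, and the coercivity $\na U_N(x)\cdot x\geqslant\tfrac12(c_V+c_W-\|\na^2_{x,x'}W\|_\infty)|x|^2-C_*N$, itself a short consequence of \eqref{Eq:hypV}--\eqref{Eq:hypW} and the condition $c_V+c_W>\|\na^2_{x,x'}W\|_\infty$ in Assumption \ref{Hyp:concret}. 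Thus this term is $\leqslant2C^2C_1(m_0)e^{-2\chi t}$. The second term, $\frac2N\mathcal W_2^2(m_\infty^{(N)},m_\infty^{\otimes N})$, is controlled by Talagrand's $T_2$ inequality — legitimate because by Proposition \ref{Prop-logSob} $m_\infty^{(N)}$ satisfies a log-Sobolev inequality with a constant $\eta$ independent of $N$ — which gives $\mathcal W_2^2(m_\infty^{(N)},m_\infty^{\otimes N})\leqslant\eta\,\ent{m_\infty^{\otimes N}}{m_\infty^{(N)}}$, combined with a uniform-in-$N$ bound on this mean-field relative entropy. Since $\frac1N\leqslant a(N)$ for all $d$ and all $N$, putting the three estimates together yields $\E(\mathcal W_2^2(M_t^N,m_\infty))\leqslant K(e^{-2\chi t}+a(N))$, which is the claim after renaming $2\chi$ as $\chi$.

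The main obstacle is the last ingredient, the uniform bound $\ent{m_\infty^{\otimes N}}{m_\infty^{(N)}}\leqslant C''$: it expresses that the $N$-particle Gibbs measure concentrates, in relative entropy, around the product of nonlinear equilibria, and it is exactly here that the structure of Assumption \ref{Hyp:concret} enters — through the fact, established in Theorem \ref{TheoNonLine}, that the free energy $E_f$ has the \emph{unique} minimizer $m_\infty$. Since the $y$-marginals of $m_\infty^{(N)}$ and $m_\infty^{\otimes N}$ coincide (the same Gaussian), this reduces to the overdamped statement $\ent{(m_\infty^x)^{\otimes N}}{\mu_N^x}\leqslant C''$ with $\mu_N^x\propto e^{-\beta U_N}$, which is classical for mean-field Gibbs measures with a unique non-degenerate minimizer of the free energy and is available from the analysis of \cite{GuillinWuZhang}. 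Apart from that, the remaining work is routine: the elementary moment computations above, and verifying that no constant secretly depends on $N$ or $t$, which is guaranteed by the uniformities in Theorem \ref{TheoLine} and Proposition \ref{Prop-logSob}.
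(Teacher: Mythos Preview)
Your overall architecture matches the paper's: split $\E[\mathcal W_2^2(M_t^N,m_\infty)]$ into an empirical-to-empirical term controlled by $\frac1N\mathcal W_2^2(m_t^{(N)},m_\infty^{\otimes N})$ and a Fournier--Guillin term $\E[\mathcal W_2^2(\bar M^N,m_\infty)]\leqslant K'a(N)$; then pass through $m_\infty^{(N)}$ by triangle inequality and use \eqref{Eq:W2Line} with the $O(N)$ bound on $\mathcal W_2^2(m_0^{\otimes N},m_\infty^{(N)})$. All of this is correct and essentially identical to what the paper does.

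The divergence --- and the gap --- is exactly at what you call ``the main obstacle'': the uniform bound $\mathcal W_2^2(m_\infty^{(N)},m_\infty^{\otimes N})=O(1)$. You obtain it via Talagrand from the claim $\ent{m_\infty^{\otimes N}}{m_\infty^{(N)}}\leqslant C''$ uniformly in $N$, which you attribute to \cite{GuillinWuZhang}. But that reference only gives the \emph{first-order} statement $\frac1N\ent{m_\infty^{\otimes N}}{m_\infty^{(N)}}\to\mathcal H_W(m_\infty)=0$ (this is the paper's \eqref{Eq:Ninfty1}); upgrading $o(N)$ to $O(1)$ is a second-order (CLT/Laplace) refinement of the partition-function asymptotics that is genuinely harder and is not provided there. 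So as written this step is unjustified.

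The paper fills precisely this hole, but by a completely different and self-contained route: Lemma~\ref{lem:propchaosEq} proves $\mathcal W_2(m_\infty^{\otimes N},m_\infty^{(N)})\leqslant K$ directly, via a \emph{dynamical bootstrap}. One starts the particle system from $m_0^{(N)}=m_\infty^{\otimes N}$, writes
\[\mathcal W_2(m_\infty^{\otimes N},m_\infty^{(N)})\ \leqslant\ \mathcal W_2(m_\infty^{\otimes N},m_t^{(N)})+\mathcal W_2(m_t^{(N)},m_\infty^{(N)}),\]
bounds the first piece by the crude short-time propagation of chaos (Proposition~\ref{prop:chaosW2}, giving $Ke^{bt}$) and the second by the long-time contraction \eqref{Eq:W2Line} (giving $Ce^{-\chi t}\mathcal W_2(m_\infty^{\otimes N},m_\infty^{(N)})$), and then chooses $t$ so that $Ce^{-\chi t}=\tfrac12$ to absorb the right-hand side. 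This yields the $O(1)$ bound with no equilibrium large-deviation input at all. Once you have Lemma~\ref{lem:propchaosEq}, the rest of your argument goes through verbatim (and coincides with the paper's).
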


As shown in \cite[Proposition 2.1]{BolleyGuillinVillani}, such a result yields confidence intervals with respect to the uniform metric for a numerical approximation of $m_\infty$ by $M_t^N \ast \xi$ where $\xi$ is a smooth kernel. It would   also be interesting in order to get concentration inequalities independent of the number of particles for additive functionals of the trajectories of the particles.

\bigskip

Finally, we consider the limit $N\rightarrow +\infty$. For $n\in\cco 1,N\ccf$, denote $m_t^{(n,N)}$ the law of $\po (X_1,Y_1),\dots,(X_n,Y_n)\pf$.

\begin{cor}\label{CorChaosUniforme}
Under Assumption \ref{Hyp:concret}, there exists $\kappa>0$ that depend only on $U,\gamma,\sigma$ such that for  all initial condition $m_0^{(N)}=m_0^{\otimes N}$ with $m_0\in\mathcal P_2(\R^{2d})$, there exists $K>0$ such that for all $N\in\N_*$, $n\in\cco 1,N\ccf$  and $t\geq 0$ ,
\begin{eqnarray*}
\mathcal{W}_2\po m_t^{(n,N)},m_t^{\otimes n} \pf & \leq & \frac{K\sqrt{n}}{N^\kappa} \\
\|m_t^{(n,N)}-m_t^{\otimes n}\|_{TV} & \leqslant & \frac{K\sqrt{n}}{N^\kappa}\,.
\end{eqnarray*}
\end{cor}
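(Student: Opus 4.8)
The plan is to combine a short-time propagation of chaos estimate with the long-time convergence results of Theorems~\ref{TheoLine} and~\ref{TheoNonLine} and of Corollary~\ref{CorPW2}, gluing the two regimes at a time of order $\ln N$, and to use Proposition~\ref{Prop-logSob} both at equilibrium and to control the short-time estimate. For the short-time part I would couple $Z_N$ with $N$ independent copies $\po\tX_i,\tY_i\pf_{i\in\cco 1,N\ccf}$ of the nonlinear process governed by~\eqref{EqVlasovFP}, driven by the same Brownian motions $B_i$ and started from the same i.i.d.\ initial data, so that $\po(X_i,Y_i)\pf_{i\leqslant n}\sim m_t^{(n,N)}$ and $\po(\tX_i,\tY_i)\pf_{i\leqslant n}\sim m_t^{\otimes n}$. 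Splitting the difference of the drifts acting on the $i$-th particle as
\[\frac1N\sum_j\po\na_x U(X_i,X_j)-\na_x U(\tX_i,\tX_j)\pf\ +\ \po\frac1N\sum_j\na_x U(\tX_i,\tX_j)-\int_{\R^{d}}\na_x U(\tX_i,x')\,m_t(\dd x',\dd y')\pf,\]
the first term is controlled, thanks to the regularity and one-sided Lipschitz bounds of Assumption~\ref{Hyp:concret}, by the coupling itself, while the second, a centred empirical mean of i.i.d.\ terms, has $\E|\cdot|^2\leqslant C/N$ — here one uses that $m_t$ has a second moment bounded uniformly in $t$, which follows from Theorem~\ref{TheoNonLine}. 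A Gronwall argument then gives $C,\lambda>0$ depending only on $U,\gamma,\sigma$ with $\W_2^2\po m_t^{(n,N)},m_t^{\otimes n}\pf\leqslant \frac{Cn}{N}\po e^{\lambda t}-1\pf$ for all $t\geqslant 0$. For total variation a coupling does not suffice, so instead I would differentiate $\ent{m_t^{(N)}}{m_t^{\otimes N}}$ along the flows; the resulting dissipation being only hypocoercive, one works, as in~\cite{MonmarcheVFP}, with a modified entropy, and bounds the source term produced by the difference of drifts — this is where the uniform log-Sobolev inequality of Proposition~\ref{Prop-logSob} enters, through a Jabin--Wang-type fluctuation estimate — by a fraction of the dissipation plus a term of size $1/N$, leading after subadditivity of the entropy and Pinsker's inequality to $\|m_t^{(n,N)}-m_t^{\otimes n}\|_{TV}^2\leqslant \frac{Cn}{N}\psi_N(t)$ with $\psi_N$ under control for $t=O(\ln N)$.

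For the long-time behaviour, the $\W_2$ estimate follows from $\W_2\po m_t^{(n,N)},m_t^{\otimes n}\pf\leqslant \W_2\po m_t^{(n,N)},m_\infty^{\otimes n}\pf+\W_2\po m_\infty^{\otimes n},m_t^{\otimes n}\pf$: the last term is $\leqslant\sqrt n\,\W_2(m_\infty,m_t)\leqslant C\sqrt n\,e^{-\chi t}$ by Theorem~\ref{TheoNonLine}, and the first, via a standard comparison between the $\W_2$-distance from the $n$-marginal of an exchangeable law to $m_\infty^{\otimes n}$ and the averaged $\W_2^2$-distance of the corresponding empirical measure to $m_\infty$, is $\leqslant\po Cn\,\E\W_2^2\po M_t^N,m_\infty\pf\pf^{1/2}\leqslant C\sqrt n\po e^{-\chi t}+a(N)\pf^{1/2}$ by Corollary~\ref{CorPW2}. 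For total variation one splits instead $\|m_t^{(n,N)}-m_t^{\otimes n}\|_{TV}$ into $\|m_t^{(n,N)}-m_\infty^{(n,N)}\|_{TV}+\|m_\infty^{(n,N)}-m_\infty^{\otimes n}\|_{TV}+\|m_\infty^{\otimes n}-m_t^{\otimes n}\|_{TV}$, bounding the first summand by $\|m_t^{(N)}-m_\infty^{(N)}\|_{TV}$ (total variation does not grow under marginalisation), Pinsker and~\eqref{Eq:EntropieLine}, the third by Pinsker, tensorisation of the entropy, \eqref{Eq:EntropieNonLin} and the fact that $\mathcal H_W$ dominates $\ent{\cdot}{m_\infty}$ (inherited by the limiting free energy from Proposition~\ref{Prop-logSob}), and the middle summand — the equilibrium propagation of chaos — again from Proposition~\ref{Prop-logSob}, which gives $\ent{m_\infty^{(n,N)}}{m_\infty^{\otimes n}}\leqslant Cn\,a(N)$. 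Setting the crossover time to $T_N=c\ln N$, the short-time estimates are used on $\{t\leqslant T_N\}$ and the long-time ones on $\{t\geqslant T_N\}$, and for $c$ in a suitable range all these quantities become $\leqslant K\sqrt n\,N^{-\kappa}$ for some $\kappa=\kappa(U,\gamma,\sigma)>0$.

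The crux — and the only genuinely non-routine point — is this last gluing. The short-time estimates deteriorate at a rate governed by Lipschitz constants attached to $\na U$, whereas the relaxation rate $\chi$ of Theorems~\ref{TheoLine}--\ref{TheoNonLine} is a (possibly much smaller) hypocoercivity rate; moreover, for total variation the particle system starts at entropic distance of order $N$ from $m_\infty^{(N)}$, so it needs a time of order $\chi^{-1}\ln N$ to equilibrate, which produces an awkward factor $\sqrt N$ in the long-time regime. Making these scales compatible — in particular pushing the short-time entropic estimate as far as possible, using Proposition~\ref{Prop-logSob} to absorb the fluctuation term into the dissipation — is exactly where the quantitative content of Assumption~\ref{Hyp:concret} is fully exploited; this is done along the lines of~\cite{MonmarcheVFP}, the rest being a routine tracking of constants.
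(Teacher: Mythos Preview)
Your overall strategy — glue a short-time propagation-of-chaos estimate with the long-time decay of Theorems~\ref{TheoLine} and~\ref{TheoNonLine} at a crossover time $T_N\sim\ln N$ — is exactly the paper's. The implementation, however, differs in two places and contains one genuine gap.

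First, the short-time total-variation part is much simpler than you suggest: no modified entropy and no Jabin--Wang estimate are needed. The paper differentiates the \emph{ordinary} relative entropy $\mathcal H\po m_t^{(N)}\,|\,m_t^{\otimes N}\pf$ (Proposition~\ref{prop:chaosEntropie}); since both densities evolve under Fokker--Planck operators with the same diffusion, the Fisher term has a sign and one is left with $F'(t)\leqslant\tfrac1{2\sigma^2}\int|\nabla U_N-\nabla\overline U_N|^2 m_t^{(N)}$, bounded by $K+K\sqrt N\,\mathcal W_2\po m_t^{\otimes N},m_t^{(N)}\pf$ via an optimal coupling and the moment bounds. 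Proposition~\ref{Prop-logSob} plays no role here. Combined with Proposition~\ref{prop:chaosW2} and Csisz\'ar's inequality (Lemma~\ref{Lem:propchaos-n}) this gives $\|m_t^{(n,N)}-m_t^{\otimes n}\|_{TV}^2\leqslant Kn N^{-1/2}e^{bt}$. Second, for the long-time $\mathcal W_2$ regime the paper does not pass through Corollary~\ref{CorPW2}; your ``standard comparison'' between $\mathcal W_2\po m_t^{(n,N)},m_\infty^{\otimes n}\pf$ and $\mathbb E\,\mathcal W_2^2\po M_t^N,m_\infty\pf$ is not obvious for $n>1$ and would itself require proof. Instead the paper bounds $\mathcal W_2\po m_t^{(N)},m_t^{\otimes N}\pf$ by the triangle inequality through $m_\infty^{(N)}$ and $m_\infty^{\otimes N}$, then applies Lemma~\ref{Lem:propchaos-n}.

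The step you flag as the ``crux'' is handled by a tool you do not mention: Lemma~\ref{lem:propchaosEq}, which shows $\mathcal W_2\po m_\infty^{\otimes N},m_\infty^{(N)}\pf\leqslant K$ \emph{uniformly in $N$}. It is itself proved by applying the gluing argument with initial condition $m_0=m_\infty$; once in hand, the regularization~\eqref{Eq:RegulEntropie} makes the long-time entropy bound at equilibrium $O(e^{-\chi t})$ rather than $O(Ne^{-\chi t})$, so no delicate matching of the rates $b$ and $\chi$ is needed. This also furnishes the equilibrium TV propagation of chaos $\|m_\infty^{(n,N)}-m_\infty^{\otimes n}\|_{TV}^2\leqslant K n N^{-\kappa}$ as a sub-step. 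Your claim that this follows directly from Proposition~\ref{Prop-logSob} is a gap: a log-Sobolev inequality for $m_\infty^{(N)}$ says nothing about the entropy of its marginal relative to a product of the \emph{nonlinear} equilibrium $m_\infty$.
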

Note that our approch to prove such uniform (in time) propagation of chaos do not lead to an optimal exponent $\kappa$ (which is 1/2) for the Wasserstein distance as seen in some more constrained example in \cite{BolleyGuillinMalrieu}. It would be interesting to consider a direct coupling approach to prove this result with sharp speed, as in \cite{DEGZ}.

\section{Proofs}

\subsection{Uniform log-Sobolev inequalities}\label{Subsec:proof_logsob}

\begin{lem}\label{Lem:Uquadra}
Under Assumption \ref{Hyp:concret}, there exist $\alpha_1,\alpha_2,\alpha_3>0$ such that for all $x,x'\in \R^d$,
\[\alpha_1 \po |x|^2 + |x'|^2 \pf - \alpha_3 \ \leqslant \  U(x,x') \ \leqslant  \ \alpha_2 \po |x|^2 + |x'|^2 \pf + \alpha_3\,.\]
\[|\na_x U(x,x')| \ \leqslant \ \alpha_2 \po |x|+|x'|\pf + \alpha_3\,.\]
\end{lem}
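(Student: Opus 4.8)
The plan is to extract from Assumption~\ref{Hyp:concret} only the three facts needed here: all second-order (and higher) derivatives of $V$ and $W$ are bounded, $W$ is bounded below, and the one-sided monotonicity estimate \eqref{Eq:hypV} holds; the hypotheses \eqref{Eq:hypW}, $c_V+c_W>\|\na^2_{x,x'}W\|_\infty$, $\beta<\beta_0$ and the strictly-convex-plus-bounded decomposition will not be used. I would first handle the gradient and upper bounds, which are immediate: boundedness of $\na^2V$ and $\na^2W$ gives $|\na V(x)|\leqslant |\na V(0)|+\|\na^2V\|_\infty|x|$ and $|\na_xW(x,x')|\leqslant |\na_xW(0,0)|+C\po|x|+|x'|\pf$, whence $|\na_xU(x,x')|=|\na V(x)+\na_xW(x,x')|\leqslant \alpha_2\po|x|+|x'|\pf+\alpha_3$ after enlarging constants; integrating these linear bounds on the segment from the origin yields $V(x)\leqslant C\po1+|x|^2\pf$ and $W(x,x')\leqslant C\po1+|x|^2+|x'|^2\pf$, and summing the three contributions of $U(x,x')=V(x)+V(x')+W(x,x')$ gives the quadratic upper bound.

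For the lower bound I would promote \eqref{Eq:hypV} to a genuine quadratic minorization of $V$. Taking $y=0$ in \eqref{Eq:hypV}, bounding $c_V'|z|\1_{\{|z|\leqslant R\}}\leqslant c_V'R$ and using Young's inequality on $\na V(0)\cdot z$, one gets $\na V(z)\cdot z\geqslant \tfrac{c_V}{2}|z|^2-C_1$ for all $z\in\R^d$, with $C_1$ depending only on $c_V,c_V',R,|\na V(0)|$. Then, for $|x|=r\geqslant 1$ and $\hat x:=x/r$, I would integrate along the ray from radius $1$ to radius $r$ (starting at $1$, not $0$, to avoid the singularity of $\rho\mapsto1/\rho$), using $\na V(\rho\hat x)\cdot\hat x=\rho^{-1}\na V(\rho\hat x)\cdot\po\rho\hat x\pf\geqslant\tfrac{c_V}{2}\rho-C_1/\rho$:
\[V(x)\ =\ V(\hat x)+\int_1^r\na V(\rho\hat x)\cdot\hat x\,\dd\rho\ \geqslant\ \min_{|z|=1}V(z)+\tfrac{c_V}{4}\po r^2-1\pf-C_1\ln r\ \geqslant\ \tfrac{c_V}{8}r^2-C_2\,,\]
the last step because $C_1\ln r\leqslant\tfrac{c_V}{8}r^2+\mathrm{const}$ for $r\geqslant1$; for $|x|\leqslant1$ the trivial bound $V(x)\geqslant\min_{|z|\leqslant1}V(z)$ makes the inequality hold after enlarging $C_2$. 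Hence $V\geqslant\tfrac{c_V}{8}|\cdot|^2-C_2$ on $\R^d$, and since $W$ is bounded below,
\[U(x,x')\ \geqslant\ \tfrac{c_V}{8}\po|x|^2+|x'|^2\pf-2C_2+\inf W\,.\]
Taking $\alpha_1=c_V/8$, $\alpha_2$ the largest constant from the first step, and $\alpha_3$ the largest of the remaining additive constants concludes.

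The lemma is mostly routine, resting entirely on the bounded second derivatives; the only step needing care is this passage from the ``convexity at infinity'' inequality \eqref{Eq:hypV} to an honest quadratic lower bound on $V$, where one integrates along rays rather than through the origin and absorbs the logarithmic remainder into a fraction of the quadratic term.
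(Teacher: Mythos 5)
Your proof is correct, but it takes a genuinely different route from the paper's for the lower bound. The paper's proof is the one-liner ``consequence of the uniform bound on $\na^2 U$ and of the fact that $U$ is the sum of a strictly convex and of a bounded function'': writing $U=U_1+U_2$ with $U_1$ being $\rho$-convex and $U_2$ bounded, the lower bound is immediate from $U_1(z)\geqslant U_1(0)+\na U_1(0)\cdot z+\tfrac{\rho}{2}|z|^2$ and Young's inequality, and the quadratic upper bound plus the gradient bound come, as in your proof, from the boundedness of $\na^2 U$. You instead deliberately set aside the ``convex $+$ bounded'' decomposition and derive the quadratic minorization of $V$ from the dissipativity hypothesis \eqref{Eq:hypV} alone, by first upgrading it to $\na V(z)\cdot z\geqslant\tfrac{c_V}{2}|z|^2-C_1$, then integrating along rays starting at radius $1$ (correctly avoiding the $1/\rho$ singularity at the origin) and absorbing the resulting $C_1\ln r$ into a fraction of $\tfrac{c_V}{4}r^2$. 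This is more work than the paper's argument, but it shows that the lower bound of the lemma follows from \eqref{Eq:hypV} and the lower boundedness of $W$ without invoking the decomposition, which the paper only really needs later on (in Lemma~\ref{Lem:AssuGWZ}, to get the one-particle conditional log-Sobolev constant via Bakry--Émery and Holley--Stroock). Both arguments are valid; yours isolates more precisely which parts of Assumption~\ref{Hyp:concret} the lemma actually uses.
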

\begin{proof}
This is a straightforward consequence of the uniform bound on $\na^2 U$ and on the fact $U$ is the sum of a strictly convex and of a bounded function.
%From the conditions \eqref{Eq:hypV} and \eqref{Eq:hypW} applied with $y=0$, for all $x,z\in\R^d$ with $x\neq 0$,
%\begin{eqnarray*}
%U(x,z) - U(0,z) -x \na_x U(0,z)  & = &  \int_0^{|x|} \frac{x}{|x|} \cdot \po \na_x U\po s\frac{x}{|x|},z\pf - \na_x U(0,z)\pf \dd s  \\
%& \geqslant & \frac{c_V+c_W}{2}|x|^2  - (c_V'+c_W') |x|\,.
%\end{eqnarray*}
%Applied with $(x,z)$ replaced by $(z,0)$, using that $U$ is symmetric, this reads
%\[U(0,z) \ = \ U(z,0) \ \geqslant \ U(0,0) + \frac{c_V+c_W}{2}|z|^2  - (c_V'+c_W') |z| + z\cdot \na_x U(0,0) \,.\]
%Moreover, for all $z\in\R^d$, $|\na_x U(0,z)|\leqslant |z| \|\na_{x,z} W\|_\infty+|\na_x U(0,0)|$. We have obtained
%\[U(x,z) \ \geqslant \  U(0,0) + \frac{c_V+c_W}{2}\po |x|^2 + |z|^2\pf  - (c_V'+c_W'+|\na_x U(0,0)|) \po |z| + |x|\pf - |x| |z| \|\na_{x,z}^2 W\|_\infty\,.\]
%The condition that $c_V+c_W>\|\na_{x,z}^2 W\|_\infty$ concludes the proof of the lower bound for $U$. The upper bounds for $U$ and $\na_x U$ follow from the boundedness of the Hessian matrix of $U$.
\end{proof}

From Lemma \ref{Lem:Uquadra}, we get that $\mathcal Z_N<+\infty$ for all $N\in\N_*$, which is the first claim of Proposition~\ref{Prop-logSob}. As we now explain, the second claim, i.e. the uniform log-Sobolev inequalities for $m_\infty^{(N)}$, $N\in\N_*$, follows   from \cite[Theorem 8]{GuillinWuZhang} (itself based on \cite[Theorem 0.1]{Zegarlinski}).  Denote 
\[\pi_\infty^{(N)}(x) \ = \ \int_{\R^{dN}} m_\infty(x,y) \dd y \ = \ \widetilde{\mathcal Z_N}^{-1} e^{-\beta U_N(x)}\,, \qquad\widetilde{\mathcal Z_N}\ = \ \int_{\R^{dN}} e^{-\beta U_N(x)} \dd x\]
the $x$-marginal of $m_\infty^{(N)}$.  A straightforward consequence of the log-Sobolev inequality for the Gaussian law and of the tensorization property of the log-Sobolev inequalities (see for example \cite{BakryGentilLedoux}) is the following:

\begin{lem}\label{Lem:tensori}
Suppose that $\pi_\infty^{(N)}$ satisfies a log-Sobolev inequality for some constant $\eta_N$. Then $m_\infty^{(N)}$ satisfies a log-Sobolev inequality with constant $\max\po \eta_N,\beta \pf$.
\end{lem}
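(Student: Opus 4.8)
The statement follows, as the surrounding text already suggests, by combining the product structure of $m_\infty^{(N)}$ with the Gaussian log-Sobolev inequality and the tensorization property. Fix $N\in\N_*$. Since the Hamiltonian splits as $H_N(x,y)=\tfrac\beta2|y|^2+\beta U_N(x)$, i.e.\ as the sum of a function of $y$ alone and a function of $x$ alone, the probability measure $m_\infty^{(N)}$ — which is well defined, $\mathcal Z_N<+\infty$ by Lemma~\ref{Lem:Uquadra} — is a product measure,
\[m_\infty^{(N)}(\dd x,\dd y) \ = \ \pi_\infty^{(N)}(\dd x)\otimes\nu_N(\dd y)\,,\qquad \nu_N(\dd y)\ \propto\ e^{-\beta|y|^2/2}\dd y\,,\]
where $\pi_\infty^{(N)}$ is exactly the $x$-marginal considered above and $\nu_N$, the $y$-marginal, is the centered Gaussian on $\R^{dN}$ with covariance $\beta^{-1}I_{dN}$.

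Now I would invoke two classical facts. First, the Gaussian measure $\nu_N$ satisfies \eqref{EqLogSob} with a constant depending on $\beta$ only — this is the Gross log-Sobolev inequality, equivalently the Bakry-Émery criterion applied to the potential $y\mapsto\tfrac\beta2|y|^2$, whose Hessian is $\beta I_{dN}$ — and, crucially, this constant is dimension-free, hence does not depend on $N$; with the normalization \eqref{EqLogSob} it can be taken equal to $\beta$. Second, log-Sobolev inequalities tensorize: if $\mu_1$ on $E_1$ and $\mu_2$ on $E_2$ satisfy \eqref{EqLogSob} with constants $c_1$ and $c_2$ respectively, then $\mu_1\otimes\mu_2$ satisfies \eqref{EqLogSob} on $E_1\times E_2$ with constant $\max(c_1,c_2)$; this rests on the subadditivity of entropy together with the fact that the Dirichlet form (the integrated Fisher information) of a product measure is the sum of the two partial Dirichlet forms (see \cite{BakryGentilLedoux}). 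Applying this with $\mu_1=\pi_\infty^{(N)}$, which satisfies \eqref{EqLogSob} with constant $\eta_N$ by assumption, and $\mu_2=\nu_N$, with constant $\beta$, yields that $m_\infty^{(N)}$ satisfies \eqref{EqLogSob} with constant $\max(\eta_N,\beta)$, as claimed.

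This proof carries no genuine difficulty: it is an assembly of standard ingredients. The two points deserving a little attention are that the log-Sobolev constant of the Gaussian factor is truly $N$-independent — which is precisely what makes this lemma useful once Proposition~\ref{Prop-logSob} supplies, via \cite[Theorem~8]{GuillinWuZhang}, a value of $\eta_N$ uniform in $N$ — and that the normalization of the log-Sobolev inequality used for the Gaussian factor and in the tensorization statement is consistently the one fixed by \eqref{EqLogSob}.
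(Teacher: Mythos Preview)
Your argument is correct and is exactly the one the paper indicates: the lemma is not given a separate proof there but is stated as an immediate consequence of the product structure $m_\infty^{(N)}=\pi_\infty^{(N)}\otimes\nu_N$, the Gaussian log-Sobolev inequality, and tensorization, with a reference to \cite{BakryGentilLedoux}. One small caveat on constants: with the normalization \eqref{EqLogSob}, Bakry--\'Emery applied to $\nu_N\propto e^{-\beta|y|^2/2}$ (curvature $\beta$) gives the sharp constant $1/(2\beta)$ rather than $\beta$, so strictly speaking the outcome is $\max(\eta_N,1/(2\beta))$ --- this discrepancy is already in the lemma's statement and is immaterial for the paper, which only needs an $N$-independent constant.
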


The study is thus reduced to $\pi_\infty^{(N)}$, which is precisely the topic of \cite{GuillinWuZhang}. We now introduced the framework of the latter. As a first step, without loss of generality we suppose that $\beta=1$. 

\begin{assu}\label{Hyp:GWZ-1}
The potential $U$ is given by $U(x,y) = V(x)+V(y)+W(x,y)$ where
\begin{enumerate}
\item The confinement potential $V\in\mathcal C^2( \R^d)$, its Hessian matrix is bounded from below and there are two positive constants
$c_1 , c_2$ such that $x \cdot \nabla V (x) \geqslant  c_1 |x|^2 - c_2$ for all $ x \in\R^d$.
\item The interaction potential $W\in\mathcal C^2( \R^d\times\R^d)$, its Hessian matrix is bounded and
\[\int_{\R^{2d}} e^{-[V (x) + V (y) + \lambda W (x, y)]} \dd x\dd y \ <\  +\infty\,,\qquad \forall \lambda>0\,.\]
\end{enumerate}
\end{assu}

\begin{assu}[Zegarlinski’s condition]\label{Hyp:GWZ-2}
Denoting
\[b_0(r) \ = \ \sup_{x,y,z\in\R^d:|x-y|=r} \po - \frac{x-y}{|x-y|} \cdot \po \na_x U(x,z)-\na_y U(y,z)\pf \pf\]
 for $r> 0$, then
 \[c_{L} \ := \ \frac14 \int_0^\infty \exp\po \frac14 \int_0^s b(r) \dd r\pf  s \dd s \ < \ +\infty\,. \]
Moreover,
\begin{eqnarray}\label{Eq:gamma0}
\gamma_0 & := & c_{L} \sup_{x,y\in\R^d,|z|=1}  |\na_{x,y}^2 U(x,y) z| \ < \ 1\,.
\end{eqnarray}
\end{assu}

\begin{assu}[Uniform conditional log-Sobolev inequality]\label{Hyp:GWZ-3}
There exist $\rho>0$ such that for all $N\in\N_*$ and all $x_{\neq1}=(x_2,\dots,x_N)\in\R^{d(N-1)}$, the conditional law $\pi_{\infty,x_{\neq1}}^{(N)}$ on $\R^d$ with density proportional to $x_1\mapsto \pi_\infty^{(N)}(x)$ satisfies a log-Sobolev inequality with constant $\rho$.
\end{assu}

Remark that the convention on what is called the constant of the log-Sobolev inequality is different in \cite{GuillinWuZhang} and in the present paper, so that  $\rho$ here corresponds to $ 1/ (2\rho_{LS,m})$ in \cite{GuillinWuZhang}. This has no impact on the result, in both cases the one-particle conditional law  is required to satisfy a log-Sobolev inequality with a constant (in either sense) uniform in $N$ and in $x_{\neq 1}$. The same remark applies for the next result.

\begin{thm}[Theorem 8 of \cite{GuillinWuZhang}]\label{Thm:GWZ}
Under Assumptions \ref{Hyp:GWZ-1}, \ref{Hyp:GWZ-2} and \ref{Hyp:GWZ-3}, there exists $\eta>0$ such that $\pi_\infty^{(N)}$ (with $\beta=1$) satisfies a log-Sobolev with constant $\eta$ for all $N\in\mathbb N$.
\end{thm}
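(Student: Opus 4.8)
The statement is Theorem~8 of \cite{GuillinWuZhang}, which rests on the criterion of Zegarli\'nski, \cite[Theorem 0.1]{Zegarlinski}; the plan is to recall why Assumptions \ref{Hyp:GWZ-1}, \ref{Hyp:GWZ-2} and \ref{Hyp:GWZ-3} place us within the scope of that criterion with a constant that does not degenerate as $N\to\infty$. The general principle is that a many-body Gibbs measure inherits a logarithmic Sobolev inequality from its single-site conditional laws as soon as (i) those one-particle conditional laws satisfy a log-Sobolev inequality uniformly in the number of sites and in the conditioning, and (ii) the dependence of each conditional law on the remaining coordinates is small in a Dobrushin sense. Point (i) is exactly Assumption \ref{Hyp:GWZ-3} (constant $\rho$). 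The work is to extract (ii), uniformly in $N$, from Assumptions \ref{Hyp:GWZ-1} and \ref{Hyp:GWZ-2}, and this is where the mean-field scaling is decisive: expanding $U_N(x)=\sum_{i=1}^N V(x_i)+\frac1{2N}\sum_{i,j=1}^N W(x_i,x_j)$, one finds for $i\neq j$ that $\na_{x_i}\na_{x_j}U_N=\frac1N\na^2_{x,x'}W(x_i,x_j)$, so that each pair of sites interacts with strength $O(1/N)$ while a given site feels $N-1$ neighbours, leaving row sums of order $\|\na^2_{x,x'}W\|_\infty$, independent of $N$.

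Concretely I would establish an approximate tensorization of entropy, $\mathrm{Ent}_{\pi_\infty^{(N)}}(f)\leqslant C\sum_{i=1}^N\E_{\pi_\infty^{(N)}}\big[\mathrm{Ent}_{\pi^{(N)}_{\infty,x_{\neq i}}}(f)\big]$, with $C$ uniform in $N$. Decompose $\mathrm{Ent}_{\pi_\infty^{(N)}}(f)$ over the coordinates along a martingale (or symmetrised Glauber) chain; the diagonal part already gives the right-hand side, while the off-diagonal error terms are covariances under the conditional laws whose magnitude is governed by coefficients $c_{ij}$ quantifying the reaction of the conditional law of $x_i$ to a change of $x_j$. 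A one-dimensional comparison for the conditional dynamics, whose restoring force is bounded below through $b_0(r)$ as in Assumption \ref{Hyp:GWZ-2}, yields $c_{ij}\leqslant c_L\,\frac1N\|\na^2_{x,x'}W\|_\infty$, hence $\sum_{j\neq i}c_{ij}\leqslant\gamma_0<1$; the finiteness $c_L<+\infty$ is precisely where the confinement bound $x\cdot\na V(x)\geqslant c_1|x|^2-c_2$ of Assumption \ref{Hyp:GWZ-1} enters, to make the defining integral of $c_L$ converge. The dependency matrix $\mathcal A=(c_{ij})$ then has operator norm at most $\gamma_0<1$, so $(\mathrm{Id}-\mathcal A)^{-1}$ is bounded uniformly in $N$, which is exactly what turns the raw decomposition into the displayed tensorization with a uniform $C$. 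Inserting the single-site inequality $\mathrm{Ent}_{\pi^{(N)}_{\infty,x_{\neq i}}}(f)\leqslant\rho\int\frac{|\na_{x_i}f|^2}{f}\dd\pi^{(N)}_{\infty,x_{\neq i}}$ of Assumption \ref{Hyp:GWZ-3} and summing over $i$, one obtains $\mathrm{Ent}_{\pi_\infty^{(N)}}(f)\leqslant C\rho\int\frac{|\na f|^2}{f}\dd\pi_\infty^{(N)}$, i.e. a log-Sobolev inequality with constant $\eta=C\rho$ depending only on $\rho$, $c_L$, $\gamma_0$ and $\|\na^2_{x,x'}W\|_\infty$ --- in particular not on $N$.

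The step I expect to be the main obstacle is this $N$-uniform control of the off-diagonal error terms in the presence of non-convex confinement. The single-site constant $\rho$ supplied by Assumption \ref{Hyp:GWZ-3} may be large --- it is not $1/c_1$ but comes from a bounded-perturbation argument --- so one must make sure the Dobrushin smallness genuinely beats it; the point is that the factor multiplying the interaction in the error terms is not $\rho$ but the finer constant $c_L$, tailored to the one-dimensional conditional diffusion with restoring force $b_0$ and built so as to be insensitive to the conditioning $x_{\neq i}$. Checking that this refined constant dominates every error term uniformly in $N$ and in $x_{\neq i}$, and ruling out any escape of mass to infinity in the conditional laws as $N\to\infty$ (again handled by the uniform confinement of Assumption \ref{Hyp:GWZ-1}), is the delicate technical core; the condition $\gamma_0<1$ is what makes the whole scheme run and, for this type of argument, is essentially sharp.
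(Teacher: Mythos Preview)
The paper does not prove this statement at all: it is quoted verbatim as Theorem~8 of \cite{GuillinWuZhang} and used as a black box, with the only additional remark being that it ``itself [is] based on \cite[Theorem 0.1]{Zegarlinski}''. So there is no proof in the paper to compare your proposal against.

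That said, your sketch is a faithful outline of the strategy that \cite{GuillinWuZhang} follows, and you have correctly identified both the mechanism (approximate tensorization of entropy via a Dobrushin-type dependency matrix with row sums bounded by $\gamma_0<1$, combined with the uniform one-site log-Sobolev of Assumption~\ref{Hyp:GWZ-3}) and the reason the mean-field scaling makes everything uniform in $N$ (each off-diagonal block of $\nabla^2 U_N$ carries a factor $1/N$, while there are $N-1$ of them). Your identification of the delicate point --- that the Dobrushin coefficients must be controlled by $c_L$ rather than by the possibly large single-site log-Sobolev constant $\rho$ --- is also accurate. As a self-contained proof, however, the proposal remains a sketch: the actual derivation of the bound $c_{ij}\leqslant (c_L/N)\|\nabla^2_{x,x'}W\|_\infty$ from the one-dimensional comparison governed by $b_0$, and the passage from Dobrushin contraction to approximate tensorization of \emph{entropy} (as opposed to variance), each require nontrivial work that you gesture at but do not carry out. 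For the purposes of the present paper this is moot, since the result is simply cited.
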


In \cite{GuillinWuZhang}, the one-particle conditional log-Sobolev inequality (i.e. Assumption \ref{Hyp:GWZ-3}) is proven under the assumption that the confinment is superconvex, meaning that $\nabla^2 V  \rightarrow +\infty$ at infinity. This is not compatible with the boundedness condition in Assumption \ref{Hyp:concret} but it is far from necessary.

In view of Lemma \ref{Lem:tensori} and Theorem \ref{Thm:GWZ}, Proposition \ref{Prop-logSob} thus follows from the following result:

\begin{lem}\label{Lem:AssuGWZ}
Assumption \ref{Hyp:concret} implies that Assumptions \ref{Hyp:GWZ-1}, \ref{Hyp:GWZ-2} and \ref{Hyp:GWZ-3} are satisfied by the potential $U_\beta = \beta U$ on $\R^{2d}$ and for the potential $H_\beta = \beta H$ on $\R^{4d}$ where $H$ is given by $H(x,y,x',y')=U(x,x')+(|y|^2+|y'|^2)/2$.
\end{lem}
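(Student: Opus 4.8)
The plan is to verify each of the three assumptions of \cite{GuillinWuZhang} in turn, for the two potentials $U_\beta = \beta U$ on $\R^{2d}$ and $H_\beta = \beta H$ on $\R^{4d}$. Since $\beta$ is a fixed positive constant, multiplying $U$ (or $H$) by $\beta$ does not affect any of the qualitative features (convexity-up-to-bounded, boundedness of Hessians, the drift lower bounds) and only rescales the various constants; moreover the definition of $\beta_0$ in Assumption~\ref{Hyp:concret} is precisely tailored so that the quantitative condition $\gamma_0<1$ survives the rescaling. I would therefore first record that it suffices to check everything for $\beta=1$ provided one keeps track of how $\beta$ enters, and treat the two cases ($2d$ and $4d$) in parallel, noting that $H$ simply adds the separable quadratic term $(|y|^2+|y'|^2)/2$, whose contribution to gradients, Hessians, drift conditions and conditional log-Sobolev inequalities is explicit and harmless (it is strongly convex in the velocity variables).

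Next I would dispatch Assumption~\ref{Hyp:GWZ-1}. The regularity and the boundedness of $\na^2 W$ are immediate from Assumption~\ref{Hyp:concret}. The lower bound on $\na^2 V$ and the coercivity $x\cdot\na V(x)\geqslant c_1|x|^2-c_2$ follow from \eqref{Eq:hypV} applied with $y=0$ together with Lemma~\ref{Lem:Uquadra} (or directly from the decomposition of $U$ into strictly convex plus bounded); the integrability condition $\int e^{-[V(x)+V(y)+\lambda W(x,y)]}<\infty$ for every $\lambda>0$ is a consequence of $W$ being lower bounded and $V$ being superquadratically-or-at-least-quadratically coercive, again via Lemma~\ref{Lem:Uquadra}. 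For the $H$ potential the extra Gaussian factor in $y,y'$ only helps integrability.

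For Assumption~\ref{Hyp:GWZ-2} I would compute, for $U$, the quantity $-\frac{x-y}{|x-y|}\cdot(\na_x U(x,z)-\na_y U(y,z))$ and bound it above using \eqref{Eq:hypV} and \eqref{Eq:hypW}: since $\na_x U(x,z) = \na V(x)+\na_x W(x,z)$, one gets $b_0(r)\leqslant -(c_V+c_W) r + (c_V'+c_W')\1_{\{r\leqslant R\}}$, hence $b_0(r)\leqslant (c_V'+c_W')\1_{\{r\leqslant R\}}$ and more importantly $\int_0^s b_0(r)\,dr \leqslant (c_V'+c_W')(R\wedge s) - (c_V+c_W)\tfrac{s^2}{2} + \dots$, which decays like $-c s^2$ for large $s$, so that $c_L<\infty$; this is where the superquadratic/strongly-dissipative behaviour at infinity is used. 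Then $\sup |\na^2_{x,x'}U\, z| = \|\na^2_{x,x'}W\|_\infty$ (the $V$ part being block-diagonal does not contribute to the cross term — one must be a little careful about exactly which mixed Hessian norm appears in \eqref{Eq:gamma0} versus the off-diagonal $\na^2_{x,x'}W$), and the condition $\gamma_0<1$ becomes a comparison between $c_L\|\na^2 W\|_\infty$ and $1$. This is exactly the role of the hypotheses $c_V+c_W>\|\na^2_{x,x'}W\|_\infty$ and $\beta<\beta_0$: after reinstating $\beta$, $b$ gets multiplied by $\beta$ and $c_L$ picks up a factor involving $e^{\beta(\cdots)}$, and the explicit value of $\beta_0$ is chosen so that $\gamma_0<1$ holds. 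I expect this to be the main obstacle — it is the only genuinely quantitative step, and it requires carefully matching the constants in Assumption~\ref{Hyp:concret} with the integral defining $c_L$ (including the $\beta$-dependence and the precise form of $b_0$ once the indicator cutoff at $R$ is taken into account), as well as the analogous but easier computation for $H$, where the added term $\tfrac12(|y|^2+|y'|^2)$ contributes $+|y-y'|^2$ to the dissipativity (only helping) and nothing to the interaction Hessian.

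Finally, Assumption~\ref{Hyp:GWZ-3}, the uniform conditional log-Sobolev inequality for $x_1\mapsto\pi_\infty^{(N)}(x)$ with the other coordinates frozen: here I would use that this conditional density is proportional to $\exp(-\beta(V(x_1) + \tfrac1N\sum_{j\neq1} W(x_1,x_j) + \tfrac1N W(x_1,x_1)))$ up to constants, i.e. of the form $\exp(-\Phi(x_1))$ where $\Phi = \beta V + (\text{something with bounded Hessian})$. Since $U$ is strictly convex plus bounded and $\na^2 W$ is bounded, $\Phi$ is a bounded (uniformly in $N$ and in $x_{\neq1}$, because the interaction Hessian is averaged) perturbation of a strongly convex function, so the Bakry–Émery criterion combined with the Holley–Stroock perturbation lemma gives a log-Sobolev constant depending only on $c_V$, $c_W$, $\|\na^2 W\|_\infty$ and $\beta$ — in particular not on $N$ nor on $x_{\neq1}$. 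For the $H$ potential the conditional law in $(x_1,y_1)$ factorizes the Gaussian $y_1$-part, which satisfies log-Sobolev with an explicit constant, so tensorization (as in Lemma~\ref{Lem:tensori}) concludes. Assembling the three verifications, Lemma~\ref{Lem:AssuGWZ} follows, and with it Proposition~\ref{Prop-logSob} via Lemma~\ref{Lem:tensori} and Theorem~\ref{Thm:GWZ}.
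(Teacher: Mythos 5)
Your proof is correct and follows essentially the same approach as the paper: the three assumptions of \cite{GuillinWuZhang} are checked in the same order, with \eqref{Eq:hypV}--\eqref{Eq:hypW} giving $b_0(r)\leqslant -(c_V+c_W)r + (c_V'+c_W')\1_{\{r\leqslant R\}}$, hence $c_L \leqslant \frac{1}{\beta(c_V+c_W)}\exp\left(\frac{\beta(c_V'+c_W')R}{4}\right)$ and $\gamma_0<1$ exactly because $\beta<\beta_0$, while Assumption~\ref{Hyp:GWZ-3} is obtained via Bakry--\'Emery plus Holley--Stroock and the $H_\beta$ case is handled by the observation that the kinetic term only improves dissipativity. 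The one slight imprecision is in the last step: the uniform conditional log-Sobolev constant comes from the decomposition $U=U_1+U_2$ with $U_1$ $\rho$-convex and $U_2$ bounded (yielding the constant $e^{2\|U_2\|_\infty}/\rho$), not from the boundedness of $\nabla^2 W$, which instead enters Assumption~\ref{Hyp:GWZ-2}; this is a bookkeeping point, not a gap.
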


\begin{proof}
We only detail the case of $U_\beta$, the case of $H_\beta$ is similar with $V(x)$ replaced by $V(x)+|y|^2/2$ and $W$ unchanged. In particular, $b_0(r)$ is the same in both cases (since the addition of the kinetic part is always non-positive).

Assumption \ref{Hyp:GWZ-1} is easily checked. Indeed, the existence of $c_1$ and $c_2$ follows from \eqref{Eq:hypV} applied with $y=0$ (and the fact $c_V>0$), and the integrability of $\exp(-\beta\po V(x)+V(y)+\lambda W(x,y)\pf )$ for all $\lambda>0$ follows from the fact $W$ is lower bounded.

Concerning Assumption \ref{Hyp:GWZ-2}, similarly to \cite[Remark 4]{GuillinWuZhang}, we see that, under the conditions \eqref{Eq:hypV} and \eqref{Eq:hypW}, the constant $c_L$ involved in Assumption \ref{Hyp:GWZ-2} is finite with
\[c_L \ \leqslant \ \frac{1}{\beta(c_V+c_W)} \exp\po \frac{\beta (c_V'+c_W') R}4\pf\,,\]
and thus
\[\gamma_0 \ \leqslant \  \frac{1}{(c_V+c_W)} \exp\po \frac{\beta (c_V'+c_W') R}4\pf \|\na_{x,y}^2 U\|_\infty \ < \  1\] 
where we used that $\beta<\beta_0$.

Finally, Assumption \ref{Hyp:concret} implies that $U=U_1+U_2$ where  $U_1$ is $\rho$-convex for some $\rho>0$ and $\|U_2\|_\infty <\infty$. Fix any $N\in\N_*$ and $x_{\neq 1}\in\R^{d(N-1)}$. Then $x_1\mapsto U_N(x)$ is the sum of a $\rho$-convex function and of a function bounded by $\|U_2\|_\infty <\infty$, so that the probability law with density proportional to  $x_1\mapsto \pi_\infty^{(N)}(x)$ satisfies a log-Sobolev with constant $e^{2\|U_2\|_\infty} /\rho$, by using Bakry-Emery's condition and Holley-Stroock perturbation argument (see \cite{BakryGentilLedoux}).

\end{proof}
Remark that we may consider slightly more general perturbation argument, namely Aida-Shigekawa \cite{AS} where $U_2$ could then be Lipschitzian but with Lipschitz constant less than $\rho/2$.

\subsection{First propagation of chaos estimates}
We will first establish uniform in time moment estimates both for the particles and non linear systems, which will come from classical Lyapunov arguments.

\begin{lem}\label{Lem:Moments}
Under Assumption \ref{Hyp:concret}, for all initial conditions $m_0\in \mathcal P_2(\R^{2d})$, there exists $K>0$ depending only on $U,\gamma,\sigma,m_0$  such that, if  $m_0^{(N)}=m_0^{\otimes N}$, then for all $N\in\N_*$ and $t\geqslant 0$,
\begin{eqnarray*}
\mathbb E\po |X_1(t)|^2 + |Y_1(t)|^2 \pf + \int_{\R^{2d}} \po |x|^2 + |y|^2\pf m_t(x,y)\dd x\dd y  & \leq & K\,.
\end{eqnarray*}
\end{lem}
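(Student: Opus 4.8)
The plan is to use a single Lyapunov function for both the $N$-particle system and the nonlinear limit, exploiting that the moment bound we seek is \emph{per particle} (so the constant $K$ must be $N$-free) and that Assumption \ref{Hyp:concret} gives, via \eqref{Eq:hypV} and Lemma \ref{Lem:Uquadra}, a quasi-quadratic control of the confinement together with a smallness condition $c_V+c_W>\|\na^2_{x,x'}W\|_\infty$ on the interaction. The natural candidate, as usual for the Langevin dynamics, is not $|x|^2+|y|^2$ itself but a tilted quadratic form
\[
G(x,y) \ = \ \frac{a}{2}|x|^2 + \frac{1}{2}|y|^2 + \varepsilon\, x\cdot y
\]
with $a,\varepsilon>0$ chosen so that $G$ is positive definite (i.e. $\varepsilon^2<a$) and so that the cross term $\varepsilon\, x\cdot y$ produces, after applying the generator, a negative $-\varepsilon a|x|^2$-type contribution that is not present when one differentiates $|x|^2+|y|^2$ alone (whose time derivative only sees $x\cdot y$, with no sign). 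For the particle system I would apply Itô's formula to $\Phi_N(Z_N)=\frac1N\sum_{i=1}^N G(X_i,Y_i)$ and compute $\mathcal L\Phi_N$; the drift terms are $\frac1N\sum_i\big[ a X_i\cdot Y_i + \varepsilon|Y_i|^2 - \gamma Y_i\cdot Y_i - \varepsilon\gamma X_i\cdot Y_i - (Y_i+\varepsilon X_i)\cdot \frac1N\sum_j \na_x U(X_i,X_j)\big]$, plus the constant diffusion term $\frac{\sigma^2}{2}d\,\varepsilon\cdot 0 + \frac{\sigma^2}{2}d$ coming from the $|y|^2$ part.

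The key step is to show that this is $\leqslant -\lambda \Phi_N + b$ for constants $\lambda,b>0$ independent of $N$. The force term is the only delicate one: writing $\na_x U(X_i,X_j)=\na V(X_i)+\na_x W(X_i,X_j)$, the confinement part $-\varepsilon X_i\cdot\na V(X_i)$ is handled by \eqref{Eq:hypV} with $y=0$, giving $-\varepsilon c_V|X_i|^2$ up to a bounded-in-$R$ error; the interaction part $-\frac{\varepsilon}{N}\sum_{i,j}X_i\cdot\na_x W(X_i,X_j)$ is controlled using \eqref{Eq:hypW} together with the symmetrization trick: by symmetry of $W$, $\frac1{N^2}\sum_{i,j}X_i\cdot\na_x W(X_i,X_j)=\frac1{2N^2}\sum_{i,j}(X_i-X_j)\cdot(\na_xW(X_i,X_j)-\na_xW(X_j,X_i))$, and then \eqref{Eq:hypW} (plus $\|\na^2 W\|_\infty<\infty$ to absorb the $X_j\cdot\na_x W(X_i,X_j)$ cross-term via $c_V+c_W>\|\na^2_{x,x'}W\|_\infty$) yields a bound of the form $-\varepsilon(c_V+c_W-\|\na^2_{x,x'}W\|_\infty)\cdot\frac1N\sum_i|X_i|^2$ plus a constant. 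Similarly the $-\frac1N\sum_i Y_i\cdot(\ldots)$ term is bounded by $\delta\frac1N\sum_i|Y_i|^2 + C_\delta\frac1N\sum_i(|X_i|^2+1)$ using Young's inequality and Lemma \ref{Lem:Uquadra}. Choosing $\varepsilon$ small, then $\delta$ small, then $a$ appropriately, all the $|X_i|^2$ and $|Y_i|^2$ coefficients become strictly negative, giving the desired Lyapunov drift inequality $\mathcal L\Phi_N\leqslant -\lambda\Phi_N+b$. A Gronwall argument then gives $\mathbb E[\Phi_N(Z_N(t))]\leqslant \Phi_N(Z_N(0))e^{-\lambda t}+b/\lambda$, and since $m_0^{(N)}=m_0^{\otimes N}$ we have $\mathbb E[\Phi_N(Z_N(0))]=\mathbb E_{m_0}[G(X_1(0),Y_1(0))]$ which is finite and $N$-free because $m_0\in\mathcal P_2(\R^{2d})$; exchangeability gives $\mathbb E[G(X_1(t),Y_1(t))]=\mathbb E[\Phi_N(Z_N(t))]$, and equivalence of $G$ with $|x|^2+|y|^2$ (both bounds, by positive-definiteness) finishes the particle estimate.

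For the nonlinear equation the same computation goes through with the empirical interaction replaced by $\int \na_x W(x,x')m_t(dx'dy')$, and the needed bound $\int x\cdot\na_x W(x,x')m_t(dx')m_t(dx')\geqslant (c_W)\int|x|^2 m_t - \ldots$ follows in the same way from \eqref{Eq:hypW}; alternatively, one can simply pass to the limit $N\to\infty$ in the particle bound using standard propagation of chaos on finite time intervals (which only needs the moment bound on $[0,T]$, itself obtained from the same Lyapunov estimate stopped at a large ball) together with Fatou, obtaining $\int(|x|^2+|y|^2)m_t\leqslant K$ with the same $K$. I expect the main obstacle to be bookkeeping: tracking the dependence of $\lambda,b$ on $\varepsilon,a,\delta$ carefully enough to verify that the smallness condition $c_V+c_W>\|\na^2_{x,x'}W\|_\infty$ is exactly what makes the interaction contribution net-negative and that no constant secretly depends on $N$; the symmetrization identity for the $W$-term is the one genuinely non-routine ingredient.
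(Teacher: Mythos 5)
Your overall scheme (a tilted quadratic Lyapunov function, Gr\"onwall, exchangeability to get an $N$-free per-particle bound, then the same computation or a limiting argument for the nonlinear equation) is exactly the paper's, but two of the concrete choices you make would not go through as written.

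First, the Lyapunov function. You take $G(x,y)=\frac{a}{2}|x|^2+\frac12|y|^2+\varepsilon\,x\cdot y$, purely quadratic. When you apply the generator, the transport part $y\cdot\na_x$ acting on $\frac{a}{2}|x|^2$ produces $a\,x\cdot y$, while the force $-\na U_N(x)\cdot\na_y$ acting on $\frac12|y|^2$ produces $-\na U_N(x)\cdot y$; these two terms do \emph{not} cancel, and $-\na U_N(x)\cdot y$ can only be absorbed by the dissipation $-\gamma|y|^2$ via Young's inequality at the cost of a $\frac{1}{\gamma}|\na U_N(x)|^2\sim\frac{1}{\gamma}|x|^2$ term, which must then be dominated by the $-\varepsilon c\,|x|^2$ coming from the confinement. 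Tracking the constants, this forces $\gamma$ to be large relative to $\|\na^2 U_N\|_\infty$, an assumption the lemma does not make. The paper avoids this entirely by taking $\tilde H(x,y)=U_N(x)+\frac12|y|^2+\varepsilon\,x\cdot y$ (and, for the nonlinear system, the analogous $R_t=\int\po U(x,x')+|y|^2+\varepsilon x\cdot y\pf\,m_t\otimes m_t$): then $y\cdot\na_x U_N$ cancels $-\na U_N\cdot\na_y\po\frac12|y|^2\pf$ exactly, and what remains, $-(\gamma-\varepsilon)|y|^2-\varepsilon\gamma x\cdot y-\varepsilon x\cdot\na U_N(x)+\frac{\sigma^2}{2}dN$, is controlled by \eqref{Eq:hypV}--\eqref{Eq:hypW} and Lemma~\ref{Lem:Uquadra} with no restriction on $\gamma$.

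Second, the symmetrization identity you invoke, namely $\frac1{N^2}\sum_{i,j}X_i\cdot\na_x W(X_i,X_j)=\frac1{2N^2}\sum_{i,j}(X_i-X_j)\cdot\po\na_x W(X_i,X_j)-\na_x W(X_j,X_i)\pf$, is false for a general symmetric $W$: expanding the right-hand side and using $W(x,x')=W(x',x)$ gives $\frac1{N^2}\sum_{i,j}X_i\cdot\na_x W(X_i,X_j)-\frac1{N^2}\sum_{i,j}X_j\cdot\na_x W(X_i,X_j)$, and the second sum does not vanish. It would vanish if $W(x,x')$ depended only on $x-x'$ (translation invariance), but that is not assumed here. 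The paper sidesteps this: it applies \eqref{Eq:hypV} and \eqref{Eq:hypW} with $y=0$ and uses the quadratic growth of $U$ from Lemma~\ref{Lem:Uquadra} to get directly that $x\cdot\na U_N(x)\geqslant\frac{c_V+c_W}{2}|x|^2-CN$, with no symmetrization and no need for the margin $c_V+c_W>\|\na^2_{x,x'}W\|_\infty$ at this stage (that margin is what drives Proposition~\ref{Prop-logSob}, not the moment bound). If you fix the Lyapunov function as above and drop the symmetrization in favour of this direct estimate, your proof becomes essentially the paper's.
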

\begin{proof}
Under Assumption \ref{Hyp:concret}, for $N\in\N_*$, $U_N$ satisfies, for all $x\in\R^{dN}$,
\begin{eqnarray}
x\cdot \na U_N(x) \ = \  \sum_{i=1}^N x_i \cdot \na_x U_N(x_i,x_j) & \geqslant & \sum_{i=1}^N    \po \po c_V+c_W\pf |x_i|^2 - \po c_V'+c_W'\pf |x_i| \pf \notag\\
& \geqslant & \frac{(c_V+c_W)}{2}|x|^2 - N \frac{(c_V'+c_W')^2}{2(c_V+c_W)}\label{eq:moments1}\,.
\end{eqnarray}
Moreover, from Lemma \ref{Lem:Uquadra}, for all $x\in\R^{dN}$,
\begin{equation}\label{eq:moments2}
\alpha_1 |x|^2 - \alpha_3 N \ \leqslant \  U_N(x) \ \leqslant  \ \alpha_2 |x|^2 + \alpha_3 N\,.
\end{equation}
From these estimates, the proof is then similar to the proof of \cite[Lemma 11]{MonmarcheVFP}, based on classical Lyapunov arguments. In the remaining of the proof, $c_i$ for $i\in\N$ denotes various positive constants that are independent from $N$ and $t$. The infinitesimal   the generator of $Z_N$ is
\begin{eqnarray}\label{EqDefiLN}
\mathcal L_N & = & y\cdot\na_x   - \po\na U_N(x)+\gamma y\pf \cdot \na_y   + \frac{\sigma^2}{2} \Delta_{y}\,.
\end{eqnarray}
For some $\varepsilon>0$, let
\[\tilde H(x,y) \ = \ U_N(x) + \frac12|y|^2 + \varepsilon x\cdot y\,.\]
Then, using \eqref{eq:moments1} and \eqref{eq:moments2}, for $\varepsilon$ small enough (and independent from $t$ and $N$),
\[\tilde H(x,y) \  \geqslant \ \frac{\alpha_1}{2}|x|^2 + \frac{1}{4}|y|^2 - \alpha_3 N\]
and
\begin{eqnarray*}
\mathcal L_N  \tilde H(x,y) &=& -(\gamma-\varepsilon) |y|^2 + \frac{\sigma^2}{2} dN - \varepsilon \gamma x\cdot y - \varepsilon x \cdot \na U_N(x)\\
%& \leq & -(\gamma-\varepsilon - \gamma^2 \sqrt{\varepsilon}) |y|^2 -\varepsilon\po\eta - \sqrt{\varepsilon}\pf|x|^2 + \frac{\sigma^2}{2} dN.
%\end{eqnarray*}
%Note that on the other hand $ U_N(x) \leq  \po\|\na^2 V\|_\infty+ 2\|\na^2 W\|_\infty\pf |x|^2$. For $\varepsilon$ small enough (and independent from $t$ and $N$) we thus have
%\begin{eqnarray*}
  & \leqslant & -c_1 H  + c_2 N
\end{eqnarray*}
for some $c_1,c_2>0$. The Gr\"onwall Lemma yields
\begin{eqnarray*}
\mathbb E\po \tilde H\po Z_N(t)\pf\pf & \leq & \mathbb E\po \tilde H\po Z_N(0)\pf\pf + \frac{c_2 N}{c_1}\,. %\\
%& \leq &   KN %\po \mathbb E\po |X_1(0)|^2 + |Y_1(0)|^2\pf + \frac{c_2 }{c_1} \pf
\end{eqnarray*}
Using the interchangeability of particles, 
\[\mathbb E\po |X_1(t)|^2+|Y_1(t)|^2\pf \ \leqslant \ \  \frac{c_3 }N \mathbb E\po \tilde H(Z_N(t))\pf + c_3  \ \leqslant \  c_4 \mathbb E\po |X_1(0)|^2+|Y_1(0)|^2\pf +  c_4  \]
for some $c_3,c_4>0$.

Similarly, for $\varepsilon >0$, denote
\[R_t \ = \ \int_{\R^{4d}} \po  U(x,x') + |y|^2 + \varepsilon x\cdot y\pf m_t(x,y)m_t(x',y') \dd x\dd y\dd x'\dd y'\,.\]
From  Assumption \ref{Hyp:concret} and Lemma \ref{Lem:Uquadra}, for $\varepsilon$ small enough,
\[R_t \ \geqslant \ c_5 \int_{\R^{2d}}\po |x|^2+|y|^2 \pf m_t(x,y)dxdy - c_6\]
for some $c_5,c_6>0$ and
\begin{eqnarray*}
\partial_t R_t &= & \int_{\R^{4d}}\po - (2\gamma-\varepsilon) |y|^2 + 2d  - \varepsilon \gamma x\cdot y - \varepsilon x \cdot \na_x U(x,x')\pf m_t(x,y)m_t(x',y') \dd x\dd y\dd x'\dd y'\\
& \leqslant & -c_7 R_t + c_8
\end{eqnarray*}
for some $c_7,c_8>0$. As a consequence,
\[\int_{\R^{2d}}\po |x|^2+|y|^2 \pf m_t(x,y)dxdy  \ \leqslant \  \frac{1}{c_5} \po R_0 + \frac{c_8}{c_7}\pf + \frac{c_6}{c5} \ \leqslant \ c_9\int_{\R^{2d}}\po |x|^2+|y|^2 \pf m_0(x,y)dxdy +c_9  \]
for some $c_9>0$, which concludes.
\end{proof}
Next proposition aims at providing a crude time dependent propagation of chaos estimate, however uniform in the number of particles.
\begin{prop}\label{prop:chaosW2}
Under Assumption \ref{Hyp:concret},  there exists $b$ (depending only on $U,\gamma,\sigma$) such that for all initial condition $m_0\in\mathcal P_2(\R^{2d})$ (and $m_0^{(N)} = m_0^{\otimes N}$), there exist $K>0$ (depending only on $U,\gamma,\sigma$  and $m_0$) such that for all $N\in\N_*$ and $t\geqslant 0$,
\begin{eqnarray*}
\mathcal W_2^2\po m_t^{\otimes N}, m_t^{(N)} \pf & \leqslant & K \po e^{bt}-1\pf\,.
\end{eqnarray*}
\end{prop}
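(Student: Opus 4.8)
The plan is to construct an explicit synchronous coupling between the $N$-particle system $Z_N = (X_i, Y_i)_{i \in \cco 1,N\ccf}$ solving \eqref{EqSystemparticul} and $N$ independent copies $(\tX_i, \tY_i)_{i \in \cco 1,N\ccf}$ of the nonlinear process, each driven by the \emph{same} Brownian motion $B_i$ and started from the same initial condition $(X_i(0), Y_i(0)) = (\tX_i(0), \tY_i(0))$, where the nonlinear process solves
\begin{eqnarray*}
\dd \tX_i & = & \tY_i \dd t\\
\dd \tY_i & = & -\gamma \tY_i \dd t - \po \int_{\R^d} \na_x U(\tX_i, x') m_t(\dd x', \dd y') \pf \dd t + \sigma \dd B_i
\end{eqnarray*}
with $m_t = \mathrm{Law}(\tX_i(t), \tY_i(t))$. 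Writing $u_i = X_i - \tX_i$ and $v_i = Y_i - \tY_i$, the Brownian terms cancel, so $(u_i, v_i)$ solves an ODE driven by the difference of drift terms. The natural Lyapunov functional to differentiate is $\Phi_t = \sum_{i=1}^N \E\po |u_i(t)|^2 + |v_i(t)|^2 + \varepsilon\, u_i(t)\cdot v_i(t) \pf$ for a small $\varepsilon > 0$, which is comparable to $\sum_i \E(|u_i|^2 + |v_i|^2)$; the cross term is the usual device to extract dissipation in the degenerate (hypoelliptic) position variable from the friction acting only on velocities.

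The key step is to bound $\partial_t \Phi_t$. Differentiating, the $v_i$ equation contributes $-\gamma |v_i|^2$ from the friction and a term involving
\[
\Delta_i \ := \ \frac1N \sum_{j=1}^N \na_x U(X_i, X_j) - \int_{\R^d} \na_x U(\tX_i, x') m_t(\dd x', \dd y')\,.
\]
I would split $\Delta_i$ into three pieces: (a) $\frac1N\sum_j \big(\na_x U(X_i, X_j) - \na_x U(\tX_i, \tX_j)\big)$, controlled by the boundedness of $\na^2 U$ (Assumption \ref{Hyp:concret}) by $C(|u_i| + \frac1N\sum_j |u_j|)$; (b) $\frac1N \sum_j \na_x U(\tX_i, \tX_j) - \int \na_x U(\tX_i, x') m_t(\dd x',\dd y')$, which is the empirical-measure fluctuation of the i.i.d. sample $(\tX_j, \tY_j)_j$ evaluated at the test function $\na_x U(\tX_i, \cdot)$; (c) nothing else. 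For piece (b), since the $(\tX_j,\tY_j)$ are i.i.d.\ with law $m_t$, its conditional expectation given $(\tX_i,\tY_i)$ vanishes and its conditional variance is $O(1/N)$ — here one uses that $\na_x U(\tX_i, \cdot)$ has at most linear growth (Lemma \ref{Lem:Uquadra}) together with the uniform-in-time second moment bound of Lemma \ref{Lem:Moments} to get $\E|\text{(b)}|^2 \leqslant c/N$ uniformly in $t$. Feeding these into the Cauchy--Schwarz / Young inequalities and summing over $i$, one obtains
\[
\partial_t \Phi_t \ \leqslant \ b\, \Phi_t + c\,,
\]
for constants $b, c$ depending only on $U, \gamma, \sigma$ (the additive constant $c$ absorbing the $N \cdot (1/N)$ contribution of the fluctuation term), and Grönwall together with $\Phi_0 = 0$ gives $\Phi_t \leqslant \frac{c}{b}(e^{bt} - 1)$. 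Since $\mathcal W_2^2\po m_t^{\otimes N}, m_t^{(N)}\pf \leqslant \E\sum_i \big(|u_i|^2 + |v_i|^2\big) \leqslant C \Phi_t$ (the coupling being admissible), this is exactly the claimed bound after renaming constants.

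The main obstacle is the careful handling of the fluctuation term (b): one must verify that its second moment is genuinely $O(1/N)$ \emph{uniformly in time}, which is where Lemma \ref{Lem:Moments} is essential — without the uniform-in-$t$ moment control the bound on $\E|\text{(b)}|^2$ would degrade in $t$ and the estimate would not have the clean form $K(e^{bt}-1)$. A secondary technical point is justifying that $\varepsilon$ can be chosen small enough, independently of $N$ and $t$, so that $\Phi_t$ is both coercive (comparable to the genuine squared distance) and yields a closed Grönwall inequality despite the cross term; this is routine and parallels the Lyapunov computation already carried out in the proof of Lemma \ref{Lem:Moments}. Everything else — exchangeability to reduce to particle $1$, admissibility of the synchronous coupling, the linear-growth bounds on $\na_x U$ — is standard and follows from the results already established above.
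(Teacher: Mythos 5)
Your proposal is correct and follows essentially the same route as the paper: synchronous (parallel) coupling driven by the same Brownian motions from the same initial data, decomposition of the drift mismatch into a Lipschitz difference plus an empirical-law fluctuation, an $O(1/N)$ variance bound on the fluctuation via i.i.d.-ness of the auxiliary particles and the uniform-in-time moment bound of Lemma~\ref{Lem:Moments}, and then Gr\"onwall with zero initial value. The one superfluous ingredient in your version is the cross term $\varepsilon\, u_i\cdot v_i$ in $\Phi_t$: since the target bound $K(e^{bt}-1)$ already tolerates exponential growth, you do not need to extract any dissipation, and the paper simply differentiates $\E|Z_N-\overline Z_N|^2$ and allows a positive Gr\"onwall rate $b$. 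Including the cross term does no harm, but it requires you to also verify coercivity for $\varepsilon$ small, which is an extra (routine) step that buys nothing here.
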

\begin{proof}
This is a classical result, obtained with a parallel coupling of the system~\eqref{EqSystemparticul} with a system of independent non-linear particles. More precisely, consider a system $\overline Z_N = (\overline X_i,\overline Y_i)_{i\in\cco 1,N\ccf}$ with $\overline Z_N(0)=Z_N(0)$ and
\begin{eqnarray*}
\forall i\in \cco 1,N\ccf & &\left\{\begin{array}{rcl}
\dd \overline X_i & = & \overline Y_i \dd t\\
\dd \overline Y_i & = & - \gamma \overline Y_i \dd t - \int_{\R^{2d}}\nabla_x U(\overline X_i,x)m_t(x,y)\dd x\dd y \dd t + \sigma \dd B_i 
\end{array} \right. 
\end{eqnarray*} 
driven by the same Brownian motion as \eqref{EqSystemparticul}. The forces $\nabla_x U$ being Lipschitz, it is clear that there exist $b'>0$ such that
\begin{multline*}
\dd |Z_N-\overline Z_N|^2 \ \leqslant \ b'|Z_N-\overline Z_N|^2 \dd t \\
- \frac2N \sum_{i=1}^N \po Y_i-\overline Y_i\pf \sum_{j=1}^N \po \na_x W(X_i,X_j) - \int \na_x W(\overline X_i, u) m_t(u,v)\pf \dd t 
\end{multline*}
Decomposing the last term as
\[   \po \na_x W(X_i,X_j) -\na W_x(\overline X_i, \overline X_j)\pf   + \po \na W_x(\overline X_i,\overline X_j)  -  \int \na_x W(\overline X_i , u) m_t(u,v) \pf\,,\]
using that $\na_x W$ is Lipschitz, taking the expectation and using that particles are interchangeable, we obtain
\begin{multline*}
\partial_t \mathbb E\po |Z_N-\overline Z_N|^2\pf \ \leqslant \  b  \mathbb E \po |Z_N-\overline Z_N|^2\pf \\
+ \frac1N \mathbb E\po \left|\sum_{j=1}^N \po \na_x W(\overline X_1,\overline X_j)-\int \na_x W(\overline X_1,u)m_t(u,v)\pf\right|^2 \pf
\end{multline*}
 for some $b>0$. Finally, using that the $(\bar X_i,\bar Y_i)_{i\in\cco 1,N\ccf}$ are independent and distributed according to $m_t$,
 \begin{eqnarray*}
 \lefteqn{\mathbb E\po \left|\sum_{j=1}^N \po \na_x W(\overline X_1,\overline X_j)-\int \na_x W(\overline X_1,u)m_t(u,v)\pf\right|^2 \pf } \\
& = &   \mathbb E\po \sum_{j=1}^N\left| \na_x W(\overline X_1,\overline X_j)-\int \na_x W(\overline X_1,u)m_t(u,v)\right|^2 \pf    \\
&\leqslant&  N \|\na^2 W\|_\infty^2  \int_{\R^{2d}} |x|^2 m_t(x,y)\dd x \dd y\, .
 \end{eqnarray*}
 The moment estimates of Lemma~\ref{Lem:Moments} and   Grönwall's Lemma conclude.
\end{proof}
We will also need dome propagation of chaos estimates in entropy, that here will be inherited from estimates on Wasserstein distance.
\begin{prop}\label{prop:chaosEntropie}
Under Assumption \ref{Hyp:concret},  there exist $K$ (depending only on $U,\gamma,\sigma$  and $m_0$) such that for all $t\geqslant 0 $ and all $N\in\N_*$,
\[\mathcal H\po m_t^{(N)} \ | \ m_t^{\otimes N}\pf \ \leqslant \ K \po t + \sqrt{N} \int_0^t \mathcal W_2\po m_s^{\otimes N},m_s^{(N)}\pf\dd s \pf \,. \]
\end{prop}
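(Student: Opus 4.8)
The plan is to compare the full $N$-particle law $m_t^{(N)}$ to the tensorized nonlinear flow $m_t^{\otimes N}$ via an entropy dissipation (relative Fisher information) argument, as is classical in quantitative propagation of chaos à la Jabin–Wang / or the older Malrieu-type computations. First I would write down the evolution of the relative entropy $t\mapsto \ent{m_t^{(N)}}{m_t^{\otimes N}}$. Both measures solve linear Fokker–Planck-type (Kolmogorov forward) equations on $\R^{2dN}$: $m_t^{(N)}$ solves the Liouville equation with generator $\mathcal L_N$ as in \eqref{EqDefiLN}, while $m_t^{\otimes N}$ is the law of $N$ independent copies of the nonlinear process, hence solves a similar equation but where the interaction force on particle $i$ is replaced by the mean-field force $\int \na_x U(x_i,u) m_t(u,v)\dd u\dd v$ (the "self-consistent" drift, which depends on $t$ but not on the other particles). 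Only the $y$-variables are diffusive, so the computation is hypoelliptic in appearance but the entropy identity only sees the $y$-Laplacian.

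The key step is the differentiation itself. Writing $h_t = m_t^{(N)}/m_t^{\otimes N}$, a direct computation (integration by parts, using that the transport parts $y\cdot\na_x$ and $-\gamma y\cdot\na_y$ are antisymmetric up to the divergence term which vanishes here, and that $\frac{\sigma^2}{2}\Delta_y$ is the only second-order part) gives
\begin{eqnarray*}
\partial_t \ent{m_t^{(N)}}{m_t^{\otimes N}} & = & -\frac{\sigma^2}{2}\int \frac{|\na_y h_t|^2}{h_t} m_t^{\otimes N} \dd x\dd y \\
& & {} + \sum_{i=1}^N \int \po b_i^{\mathrm{mf}}(t,x) - b_i^{(N)}(x)\pf\cdot \na_{y_i}\log h_t \ \dd m_t^{(N)}\,,
\end{eqnarray*}
where $b_i^{(N)}(x) = \frac1N\sum_j \na_x U(x_i,x_j)$ and $b_i^{\mathrm{mf}}(t,x) = \int \na_x U(x_i,u)m_t(u,v)\dd u\dd v$ are the two drifts felt by particle $i$; the confinement parts $\na V$ cancel since they coincide in both systems, so only the $W$-part survives in the difference. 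Then I would use Young's inequality $a\cdot b \leqslant \frac{\sigma^2}{4}|a|^2 + \frac{1}{\sigma^2}|b|^2$ on the force term to absorb $\na_{y_i}\log h_t$ into half of the Fisher information, leaving
\[\partial_t \ent{m_t^{(N)}}{m_t^{\otimes N}} \ \leqslant \ \frac{1}{\sigma^2}\sum_{i=1}^N \int \left| b_i^{\mathrm{mf}}(t,x)-b_i^{(N)}(x)\right|^2 \dd m_t^{(N)}\,.\]
It remains to bound the right-hand side. Following the standard trick, split $b_i^{\mathrm{mf}}-b_i^{(N)}$ as $\po \frac1N\sum_j \na_x W(x_i,x_j) - \int \na_x W(x_i,u)m_t^{\otimes N}\text{-avg}\pf$ evaluated along a coupling: more concretely, bound it by the sum of a "law of large numbers" fluctuation term and a transport term. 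Since $\na_x W$ and $\na^2 W$ are bounded and Lipschitz, one gets $|b_i^{\mathrm{mf}}(t,x)-b_i^{(N)}(x)|^2 \leqslant \frac{C}{N}\sum_j |x_j - \widetilde x_j|^2 + (\text{fluctuation})$ along an optimal $\mathcal W_2$-coupling of $m_t^{(N)}$ and $m_t^{\otimes N}$; summing over $i$ the transport part contributes $\lesssim \mathcal W_2^2(m_t^{(N)},m_t^{\otimes N})$ after taking expectation, but a cleaner route — and I think the one intended here — is to go through a term linear rather than quadratic in $\mathcal W_2$. Indeed, bounding $|b_i^{\mathrm{mf}}-b_i^{(N)}| \leqslant \|\na^2 W\|_\infty \cdot \frac1N\sum_j|x_j - \widetilde X_j| + |\frac1N\sum_j \na_x W(\widetilde X_i,\widetilde X_j) - \int\na_x W(\widetilde X_i,u)m_t|$ with the $\widetilde X$ an i.i.d.-$m_t$ sample, and then using $\sum_i (\frac1N\sum_j|x_j-\widetilde X_j|)^2 \leqslant \sum_j |x_j-\widetilde X_j|^2$, and estimating the remaining i.i.d. average in $L^2$ by $\frac{C}{N}\int|x|^2 m_t(\dd x\dd y) \leqslant C/N$ via Lemma~\ref{Lem:Moments} exactly as in the proof of Proposition~\ref{prop:chaosW2}, one obtains after expectation
\[\partial_t \mathbb E\ent{m_t^{(N)}}{m_t^{\otimes N}} \ \leqslant \ C\, \mathbb E\left[\mathcal W_2^2(m_t^{(N)},m_t^{\otimes N})\right]^{1/2}\cdot\sqrt{\mathbb E\ent{\cdots}}\ + \ \frac{C}{\text{(appropriate power)}}\,;\]
I expect the actual bookkeeping to be organized so as to produce the clean statement, namely integrating in time from $\ent{m_0^{(N)}}{m_0^{\otimes N}}=0$ and using Cauchy–Schwarz to replace the quadratic dissipation-absorption by a term of order $1$ per unit time plus $\sqrt N \int_0^t \mathcal W_2(m_s^{\otimes N},m_s^{(N)})\dd s$. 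Concretely: the "$t$" in the statement comes from the $O(1/N)\times N = O(1)$ fluctuation contribution integrated over $[0,t]$, and the "$\sqrt N\int_0^t\mathcal W_2$" comes from the cross term $\sum_i \na^2 W$-Lipschitz bound which is $O(1)\times N\times \mathcal W_2(m_s^{(N)},m_s^{\otimes N})/\sqrt{N}\times\sqrt{N}$ — one has to track the powers of $N$ carefully here.

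The main obstacle I anticipate is precisely this power-of-$N$ accounting together with the hypoelliptic nature of the dynamics: one must be careful that the entropy dissipation only controls $\na_y \log h_t$, not the full gradient, so the force-difference term — which couples only to $\na_{y_i}\log h_t$ — must be handled entirely by Young against the $y$-Fisher information, and no $\na_x$-coercivity is available. This is fine because the drift difference does indeed only appear in the $\dd Y_i$ equation, so the entropy identity naturally pairs it with $\na_{y_i}$; but it does mean one cannot hope to close a Gronwall estimate directly on the entropy (there is no entropy dissipation term to spare once $W$ is non-small), which is exactly why the statement is an a priori bound in terms of $\int_0^t\mathcal W_2\dd s$ rather than a self-contained exponential estimate — the $\mathcal W_2$ control having been supplied separately by Proposition~\ref{prop:chaosW2}. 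A secondary technical point is justifying the entropy differentiation rigorously (finiteness of the relative Fisher information, enough regularity/integrability of $m_t^{(N)}$ and $m_t$ to integrate by parts), which one typically dispatches by a regularization/approximation argument or by invoking known regularity for Langevin-type Fokker–Planck equations with the smooth, at-most-quadratic potentials guaranteed by Lemma~\ref{Lem:Uquadra}.
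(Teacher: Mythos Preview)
Your strategy --- differentiate $\ent{m_t^{(N)}}{m_t^{\otimes N}}$, obtain the negative $y$-Fisher term plus the drift-difference paired with $\na_{y_i}\log h_t$, absorb the latter by Young, and be left with $\frac{1}{2\sigma^2}\sum_i\int|b_i^{(N)}-b_i^{\mathrm{mf}}|^2\dd m_t^{(N)}$ --- is exactly the paper's. Your first route to bounding this last quantity, giving $\partial_t F\leqslant C+C\,\mathcal W_2^2\po m_t^{(N)},m_t^{\otimes N}\pf$, is in fact correct; integrated, it yields $F(t)\leqslant Ct+C\int_0^t\mathcal W_2^2\,\dd s$. This is a \emph{different} inequality from the one stated (neither implies the other in general), but combined with Proposition~\ref{prop:chaosW2} it gives $F(t)\leqslant Ce^{bt}$ uniformly in $N$, which is at least as useful for the downstream applications in Corollary~\ref{CorChaosUniforme}. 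You should not have abandoned it.

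The genuine gap is in your attempt to reach the \emph{stated} $\sqrt N\,\mathcal W_2$ form: bounding $|b_i^{(N)}-b_i^{\mathrm{mf}}|$ pointwise and \emph{then} squaring destroys the off-diagonal cancellation, and your closing power-counting is hand-waving, not an argument. The paper's device is to expand the square \emph{first}. By exchangeability,
\[F'(t)\ \leqslant\ \frac{N}{2\sigma^2}\,\mathbb E\left|\frac1N\sum_{j}\po\na_x W(X_1,X_j)-\int\na_x W(X_1,v)\,m_t\pf\right|^2\,.\]
The $N$ diagonal terms of the expanded square contribute $O(1)$ via Lemma~\ref{Lem:Moments}. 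For each off-diagonal pair $(j,k)$ one takes an optimal coupling $(\overline X_\ell)$ with $m_t^{\otimes N}$ and decomposes $\na_x W(X_1,X_j)-\int=\po\na_x W(X_1,X_j)-\na_x W(X_1,\overline X_j)\pf+\po\na_x W(X_1,\overline X_j)-\int\pf$, keeping $X_1$ unchanged in the first slot. The product of the two centred pieces has zero expectation by independence of the $\overline X_\ell$'s; the mixed pieces are controlled by Cauchy--Schwarz and exchangeability via $\sqrt{\mathbb E|X_j-\overline X_j|^2}=N^{-1/2}\mathcal W_2$. Collecting $N(N-1)$ off-diagonals with weight $N^{-2}$ and the outer factor $N$ produces exactly $K\sqrt N\,\mathcal W_2$. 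This decomposition --- coupling only in the second argument of $\na_x W$ and exploiting the resulting cancellation --- is the step your proposal is missing.
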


\begin{proof}
We follow the idea of \cite[Lemma 3.15]{Malrieu} (see also \cite[Lemma 14]{MonmarcheVFP}), namely we compute the derivative of
\begin{eqnarray*}
F(t) & = &   \ent{m_t^{(N)}}{m_t^{\otimes N}}  .
\end{eqnarray*}
To do so, let $u_1 = m_t^{(N)}$, $u_2 = m_t^{\otimes N}$,
\[b_1(x,y) = \begin{pmatrix}
y \\ -\gamma y -  \na_x U_N(x)  
\end{pmatrix},\hspace{25pt}
b_2(x,y) = \begin{pmatrix}
y \\ -\gamma y -\na_x \overline U_{N}(x)
\end{pmatrix}
\]
with
\[\overline U_N(x) \ = \ \sum_{i=1}^N \int   U(x_i,v) m_t(v,w) \dd v \dd w,\]
 and $L_i f = - \na \cdot \po b_i  f \pf + \frac{\sigma^2}{2} \Delta_y f$ for $i=1,2$. With these notations, $\partial_t \po u_i\pf = L_i u_i$, and the dual in the Lebesgue sense of $L_i$ is $L_i' = b_i\cdot \na + \frac{\sigma^2}{2}  \Delta_y$. From the conservation of the mass of $u_1$, we get
%\int L_1 u_1 = \partial_t \po \int u_1\pf =
\[  0 = \partial_t\po \int \frac{u_1}{u_2}u_2 \pf = \int \po L_1 u_1  - \frac{u_1}{u_2}L_2 u_2 +  L_2'\po\frac{u_1}{u_2}\pf u_2 \pf .  \]
Since $L_1'$ is a diffusion operator with carr\'e  du champ operator $\Gamma f = \frac{\sigma^2}{2}  |\na_y f|^2$ (see \cite[p.20 \& 42]{BakryGentilLedoux} for the definitions),
\[ u_1 L_1' \ln\po \frac{u_1}{u_2} \pf = u_1 \frac{L_1'\po\frac{u_1}{u_2}\pf}{\frac{u_1}{u_2}} - u_1\frac{\Gamma\po \frac{u_1}{u_2}\pf}{\po \frac{u_1}{u_2}\pf^2} = u_2 L_1'\po\frac{u_1}{u_2}\pf - u_1 \Gamma\po \ln \frac{u_1}{u_2} \pf.\]
Using both these relations,
\begin{eqnarray*}
\partial_t \po \int \ln\po\frac{u_1}{u_2}\pf u_1\pf & = & \int \po \frac{L_1 u_1}{u_1} - \frac{L_2 u_2}{u_2} + L_1' \ln\po\frac{  u_1}{u_2}\pf\pf u_1\\
& = & \int -\Gamma\po \ln \frac{u_1}{u_2} \pf u_1 + u_2 L_1'\po\frac{u_1}{u_2}\pf - u_2 L_2'\po\frac{u_1}{u_2}\pf \\
& = &   \int -\Gamma\po \ln \frac{u_1}{u_2} \pf u_1 +  (b_1-b_2)\cdot \na \ln \po\frac{u_1}{u_2}\pf u_1.
\end{eqnarray*}
Applying Young's Inequality, we get
\begin{eqnarray*}
F'(t) & \leq & \frac1{2\sigma^2} \int \left| \na  U_N(x)  - \na  \overline U_N(x) \right|^2 m_t^{(N)}\\
& = & \frac{N}{2\sigma^2} \mathbb E\po \left|\frac1N \sum_{j=1}^N \na_x  W(X_1,X_j) - \int \na_x W(X_1,v) m_t(v,w)\right|^2\pf%\\
%& \leq & \frac{\|\na_{x_1}^2 U_{int} \|_\infty}{2\sigma^2} \mathbb E\po \sum_{j=1}^N  \left| X_j - \int v m_t{v,w}\right|^2\pf\\
%& \leq & \frac{g(t) \|\na_{x_1}^2 U_{int} \|_\infty}{2\sigma^2} 
\end{eqnarray*}
by interchangeability. Developing the square of the sum, the $N$ diagonal terms are bounded by
\begin{eqnarray*}
  \frac1{N^2}\| \nabla ^2 W\|_\infty^2 \po \mathbb  E\po  |X_j|^2\pf + \int |v|^2 m_t(v,w) \pf &\leq & \frac{K}{N^2}
\end{eqnarray*}
for some $K>0$ where we used  Lemma \ref{Lem:Moments}. For the extra-diagonal terms, we consider an optimal coupling $(\overline Z_N,Z_N)$ of  $m_t^{\otimes N}$ and $m_t^{(N)}$ in the sense that
\begin{eqnarray*}
\mathbb E\po \left|\overline Z_N(t)-Z_N(t)\right|^2\pf &=& \mathcal W_2^2\po m_t^{\otimes N},m_t^{(N)}\pf
\end{eqnarray*} 
and write, for $j\neq k$,
\begin{eqnarray*}
& & \po \na  W(X_1,X_j) - \int \na  W(X_1,v) m_t \pf  \po  \na  W(X_1,X_k) - \int \na  W(X_1,v) m_t \pf\\
 & = & \po \na  W(X_1,X_j) - \na  W(X_1,\overline X_j)\pf\po  \na  W(X_1,X_k) - \int \na  W(X_1,v) m_t \pf\\
 & & + \po \na  W(X_1,\overline X_j) - \int \na  W(X_1,v) m_t \pf\po  \na  W(X_1,X_k) - \na  W(X_1,\overline X_k)\pf\\
 & & + \po \na  W(X_1,\overline X_j) - \int \na  W(X_1,v) m_t \pf \po \na  W(X_1,\overline X_k) - \int \na  W(X_1,v) m_t \pf.
\end{eqnarray*}
The $\overline X_i$'s being independent with law the first marginal of $m_t$, the expectation of the third term vanishes, while the expectations of the two other terms is bounded by the Cauchy-Schwarz inequality and interchangeability  by
\[  \| \nabla ^2 W\|_\infty^2 \sqrt{\mathbb  E\po  |X_1 - \overline{X}_1|^2\pf \po \mathbb  E\po  |X_1|^2\pf + \mathbb  E\po  |\overline{X}_1|^2\pf  \pf} \ \leqslant \ \frac{K}{\sqrt N} \mathcal W_2\po m_t^{\otimes N},m_t^{(N)}\pf\]
for some $K>0$ where we used again interchangeability and Lemma~\ref{Lem:Moments} for the second inequality. As a conclusion, we have obtained that for all $t\geqslant 0$
\begin{eqnarray*}
F'(t) & \leqslant & K + K \sqrt{ N} \mathcal W_2\po m_t^{\otimes N},m_t^{(N)}\pf 
\end{eqnarray*}
for some $K>0$ independent from $t$ and $N$, and the claims follows from the fact $F(0)=0$.
\end{proof}

\begin{lem}\label{Lem:propchaos-n}
For all $t\geqslant 0 $,   $N\in\N_*$ and $n\in \cco 1,N\ccf$,
\begin{eqnarray*}
\mathcal W_2^2\po m_t^{\otimes n}, m_t^{(n,N)} \pf & \leqslant & \frac{n}N \mathcal W_2^2\po m_t^{\otimes N}, m_t^{(N)} \pf\\
\mathcal H \po m_t^{(n,N)}\ |\ m_t^{\otimes n}\pf & \leqslant & \frac{1}{\lfloor N/n\rfloor} \mathcal H \po m_t^{(N)}\ |\ m_t^{\otimes N}\pf \,.
\end{eqnarray*}
\end{lem}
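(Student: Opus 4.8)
The plan is to prove the two inequalities of Lemma~\ref{Lem:propchaos-n} separately, both relying only on the exchangeability of the particles.

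\textbf{Wasserstein bound.} First I would take an optimal coupling $(\overline{Z}_N, Z_N)$ of $m_t^{\otimes N}$ and $m_t^{(N)}$, so that $\mathbb{E}(|\overline{Z}_N - Z_N|^2) = \mathcal{W}_2^2(m_t^{\otimes N}, m_t^{(N)})$. Restricting this coupling to the first $n$ coordinates $\big((\overline{X}_i,\overline{Y}_i),(X_i,Y_i)\big)_{i\in\cco 1,n\ccf}$ yields a coupling of $m_t^{\otimes n}$ and $m_t^{(n,N)}$ (the marginal of $m_t^{(N)}$ on the first $n$ particles is $m_t^{(n,N)}$ by definition, and the marginal of $m_t^{\otimes N}$ is $m_t^{\otimes n}$). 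Hence
\[\mathcal{W}_2^2\po m_t^{\otimes n}, m_t^{(n,N)}\pf \ \leqslant \ \mathbb{E}\po \sum_{i=1}^n \left| (\overline{X}_i,\overline{Y}_i) - (X_i,Y_i) \right|^2 \pf \ = \ n\, \mathbb{E}\po \left| (\overline{X}_1,\overline{Y}_1) - (X_1,Y_1) \right|^2 \pf\]
by exchangeability of the joint law (which holds since the optimal coupling can be symmetrized, or simply because the marginal laws are exchangeable and the optimal plan may be chosen exchangeable). Summing this identity over the $N$ particles gives $N\,\mathbb{E}(|(\overline{X}_1,\overline{Y}_1)-(X_1,Y_1)|^2) = \mathcal{W}_2^2(m_t^{\otimes N}, m_t^{(N)})$, whence the first claim.

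\textbf{Entropy bound.} For the relative entropy, I would use the subadditivity of entropy under exchangeability: writing $k = \lfloor N/n\rfloor$, partition $\cco 1, kn\ccf$ into $k$ disjoint blocks of size $n$. For a probability measure $\mu^{(N)}$ on $(\R^{2d})^N$ that is exchangeable and a product reference measure $\mu^{\otimes N}$, the standard subadditivity inequality (see e.g.\ the tensorization/subadditivity of entropy, as in \cite{BakryGentilLedoux}) gives, for each block $B$ of size $n$, that the relative entropy of the $B$-marginal with respect to $\mu^{\otimes n}$ equals $\mathcal{H}(m_t^{(n,N)}\,|\,m_t^{\otimes n})$ by exchangeability, and the sum over the $k$ disjoint blocks is bounded by $\mathcal{H}(m_t^{(N)}\,|\,m_t^{\otimes N})$. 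This is because relative entropy with respect to a product measure is superadditive over disjoint groups of coordinates: $\sum_{\ell=1}^{k}\mathcal{H}\big(m_t^{(N)}\!\restriction_{B_\ell}\,\big|\,m_t^{\otimes n}\big) \leqslant \mathcal{H}(m_t^{(N)}\,|\,m_t^{\otimes N})$, which follows from the chain rule for entropy and the fact that conditioning can only increase the per-block entropy, or directly from the variational formula. Since each term on the left equals $\mathcal{H}(m_t^{(n,N)}\,|\,m_t^{\otimes n})$, we get $k\,\mathcal{H}(m_t^{(n,N)}\,|\,m_t^{\otimes n}) \leqslant \mathcal{H}(m_t^{(N)}\,|\,m_t^{\otimes N})$, which is the second claim.

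\textbf{Main obstacle.} Both arguments are essentially bookkeeping; the only genuine point requiring care is the superadditivity/subadditivity of relative entropy over disjoint blocks of coordinates with respect to a product measure. I would make sure to state it in the form actually needed — namely that for disjoint index sets $B_1,\dots,B_k$, $\sum_\ell \mathcal{H}(\nu_{B_\ell}\,|\,\mu^{\otimes B_\ell}) \leqslant \mathcal{H}(\nu\,|\,\mu^{\otimes N})$ — and cite it rather than reprove it, since it is classical. One should also be careful that when $N$ is not a multiple of $n$ only $\lfloor N/n\rfloor$ full disjoint blocks fit, which is exactly why the floor appears, and that the leftover coordinates are simply discarded, using that removing coordinates only decreases relative entropy to a product marginal.
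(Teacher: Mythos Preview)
Your proof is correct and follows essentially the same approach as the paper. For the Wasserstein part you do exactly what the paper does (restrict an optimal coupling to the first $n$ coordinates and use exchangeability); for the entropy part you invoke the superadditivity of relative entropy over disjoint blocks with respect to a product reference, whereas the paper proves this inline by inserting the intermediate measure $(m_t^{(n,N)})^{\otimes k}\otimes m_t^{\otimes s}$ and recognizing the resulting decomposition as a nonnegative relative entropy plus $k\,\mathcal H(m_t^{(n,N)}|m_t^{\otimes n})$ --- the two are the same computation, yours packaged abstractly and theirs unrolled explicitly (and attributed to Csisz\'ar).
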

\begin{proof}
Let $Z_N=((X_1,Y_1),\dots,(X_N,Y_N))$ and $\overline Z_N = ((\overline X_1,\overline Y_1),\dots,(\overline X_N,\overline Y_N))$ be a $\mathcal W_2$-optimal coupling of $m_t^{(N)}$ and $m_t^{\otimes N}$, i.e. be such that $Z_N\sim m_t^{(N)}$, $\overline Z_N\sim m_t^{\otimes N}$ and 
\[\mathcal W_2^2 \po m_t^{(N)},m_t^{\otimes N}\pf \ = \ \mathbb E\po |Z_N-\overline Z_N|^2 \pf\,. \]
Then $((X_1,Y_1),\dots,(X_n,Y_n))$  and  $((\overline X_1,\overline Y_1),\dots,(\overline X_n,\overline Y_n))$ are a coupling of   $m_t^{(n,N)}$ and $m_t^{\otimes n}$ and, by exchangeability,
\[\mathcal W_2^2 \po  m_t^{(n,N)}, m_t^{\otimes n}\pf \ \leqslant \ \sum_{i=1}^n \mathbb E\po |X_i-\overline X_i|^2 + |Y_i-\overline Y_i|^2\pf \ = \ \frac{n}N \mathcal W_2^2 \po  m_t^{(N)}, m_t^{\otimes N}\pf\,.\]
The second claim follows  from the Csisz{\'a}r's inequality which is \cite[Inequality (2.10)]{Csiszar} for $n=1$. Let us establish it for any $n\in\cco1,N\ccf$. Set $k=\lfloor N/n\rfloor$ and $s=N-kn$.
\begin{eqnarray*}
\int_{\R^{2dN}} m_t^{(N)} \ln \po \frac{m_t^{(N)}}{m_t^{\otimes N}}\pf \dd z  & = & \int_{\R^{2dN}} m_t^{(N)} \ln \po \frac{m_t^{(N)}}{\po m_t^{(n,N)}\pf^{\otimes k}\otimes m_t^{\otimes s}}\pf \dd z \\
& & +\  \int_{\R^{2dN}} m_t^{(N)} \ln \po \frac{\po m_t^{(n,N)}\pf^{\otimes k}\otimes m_t^{\otimes s}}{m_t^{\otimes N}}\pf \dd z \\
& \geqslant & k \int_{\R^{2dn}} m_t^{(n,N)} \ln \po \frac{  m_t^{(n,N)}}{m_t^{\otimes n}}\pf \dd z
\end{eqnarray*}
where we used that the first term is positive (as a relative entropy) and the interchangeability of $m_t^{(N)}$.% Using that $2\lfloor x\rfloor \geqslant x$ for all $x\geqslant 1$ concludes.
%The Pinsker's inequality concludes.
\end{proof}

\subsection{Long-time convergence}

The proof of Theorem~\ref{TheoLine} is based on the following  quantitative results of hypocoercivity for diffusion processes.

\begin{thm}[from Theorem 10 of  \cite{MonmarcheGamma}]\label{Theo10Gamma}
Consider a diffusion generator $L$  on H\"ormander form
\[L = B_0 + \sum_{i=1}^d B_i^2\]
where the $B_j$'s are derivation operators. Suppose there exist $N_c\in \mathbb N$ and $\lambda,\Lambda,m,\rho,K>0$ such that for $i\in\llbracket 0,N_c+1\rrbracket$ there exist smooth derivation operators $C_i$ and $R_i$ and a scalar field $Z_i$  satisfying:
\begin{enumerate}%[(i)]
\item $C_{N_c+1} = 0$,\ and \  $[B_0,C_i] = Z_{i+1} C_{i+1} + R_{i+1}$ \  for all $i\in\llbracket 0,N_c\rrbracket$, where $[A,B]=AB-BA$ stands for the Poisson bracket of two operators,
\item $[B_j,C_i] = 0$ \ for all $i\in\llbracket 0,N_c\rrbracket$, $j\in \llbracket 1,d\rrbracket$, 
\item  $\lambda \leq  Z_i  \leq \Lambda$ \ for all $i\in\llbracket 0,N_c\rrbracket$,
\item $|C_0 f|^2 \leq m \underset{j\geq 1}\sum |B_j f|^2$ and $|R_i f|^2 \leq m \underset{j<i}\sum |C_j f|^2$ for all $i\in\llbracket 0,N_c+1\rrbracket$ and smooth Lipschitz $f$.
\item $\underset{i\geq 0}\sum | C_i f|^2 \geq \rho |\na f|^2$.
\end{enumerate}
Suppose moreover that there exists a probability measure $\mu$ which is invariant for $e^{tL}$ and satisfies a log-Sobolev inequality with constant $\eta$.

\bigskip

Then   for all $t>0$ and for all $f>0$ with $\int f\dd \mu = 1$,
\begin{eqnarray}\label{EqTheorCoerc}
 \int \po e^{tL}f\pf\ln \po e^{tL}f\pf\dd \mu & \leq &  e^{-\kappa t(1-e^{-t})^{2N_c}}  \int f\ln f\dd \mu
\end{eqnarray}
with
\begin{eqnarray*}%\label{Eqbornek}
\kappa &=& \frac{\rho}{\eta} \po \frac{100}{\lambda}\po N_c^2 + \frac{\Lambda^2}{\lambda}+m\pf \pf^{-20 N_c^2}.
\end{eqnarray*}
\end{thm}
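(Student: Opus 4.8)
\emph{Setup and strategy.} The plan is to run the entropic (twisted) hypocoercivity scheme of \cite{MonmarcheGamma}, in the spirit of Hérau and Villani. Write $u_t = e^{tL}f$, so that $u_t>0$, $\int u_t\,\dd\mu = 1$ and $\partial_t u_t = Lu_t$, and let $\Gamma(g) = \sum_{j=1}^d|B_jg|^2$ be the carré du champ of $L$. Using $\partial_t u_t = Lu_t$, the invariance of $\mu$ (so $\int Lg\,\dd\mu = 0$) and the chain rule for the derivations $B_j$, the relative entropy $\mathcal E(t) := \int u_t\ln u_t\,\dd\mu$ satisfies the dissipation identity $\mathcal E'(t) = -\int u_t\,\Gamma(\ln u_t)\,\dd\mu = -\sum_{j=1}^d\int|B_ju_t|^2/u_t\,\dd\mu$. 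This alone does not close: assumption 5 only shows the full Fisher information is bounded by $\rho^{-1}\sum_{i=0}^{N_c}\int|C_iu_t|^2/u_t$, and the $B_j$'s see just some of those directions. So I would introduce the directional Fisher informations $\mathcal I_i(u) := \int|C_iu|^2/u\,\dd\mu$, $i\in\cco0,N_c\ccf$, and the modified functional
\[ \mathsf F_t(u) \ = \ \mathcal E(u) \ + \ \sum_{i=0}^{N_c}a_i(t)\,\mathcal I_i(u) \ + \ \sum_{i=0}^{N_c-1}\delta_i(t)\int\frac{(C_iu)(C_{i+1}u)}{u}\,\dd\mu \, , \]
with nonnegative coefficients forming a hierarchy in which $\delta_i$ is small compared with $a_i$ and $a_{i+1}$ (so the quadratic form in $(C_0u/\sqrt u,\dots,C_{N_c}u/\sqrt u)$ stays $\geqslant\tfrac12\sum_ia_i\mathcal I_i$) and which vanish at $t=0$, so that $\mathsf F_0(f) = \int f\ln f\,\dd\mu$. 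The goal is a closed inequality $\tfrac{\dd}{\dd t}\mathsf F_t(u_t)\leqslant-c(t)\,\mathsf F_t(u_t)$ with $\int_0^tc(s)\,\dd s\geqslant\kappa\,t(1-e^{-t})^{2N_c}$; since $\mathsf F_t(u_t)\geqslant\mathcal E(t)$, Grönwall's lemma then yields \eqref{EqTheorCoerc}.

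\emph{The commutator cascade.} A $\Gamma$-calculus computation along the flow gives, for each $i$,
\[ \frac{\dd}{\dd t}\mathcal I_i(u_t) \ = \ -2\sum_{j=1}^d\int u_t\,\bigl|B_j\bigl(\tfrac{C_iu_t}{u_t}\bigr)\bigr|^2\,\dd\mu \ + \ 2\int\frac{(C_iu_t)\,[B_0,C_i]u_t}{u_t}\,\dd\mu \, , \]
the key point being that assumption 2 ($[B_j,C_i]=0$, hence $[B_j^2,C_i]=0$) kills the diffusive part of the commutator and leaves the clean nonnegative dissipation on the left. Inserting assumption 1, $[B_0,C_i]u = Z_{i+1}C_{i+1}u + R_{i+1}u$: the first part couples level $i$ to level $i+1$, while $R_{i+1}u$ is, by assumption 4, controlled in $L^2(u^{-1})$ by $\mathcal I_0(u),\dots,\mathcal I_{N_c}(u)$. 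Differentiating a coupling term $\int(C_iu)(C_{i+1}u)/u\,\dd\mu$ likewise, the contribution of $[B_0,C_i]u$ produces a term $\pm2Z_{i+1}\mathcal I_{i+1}(u_t)$ which, for the right sign of $\delta_i$, is a genuine \emph{negative} multiple of $\mathcal I_{i+1}$ (using $Z_{i+1}\geqslant\lambda>0$), plus cross carré-du-champ errors absorbable by the diagonal dissipations at levels $i$ and $i+1$. Thus one runs a cascade: the negative $\mathcal I_{i+1}$ supplied by the coupling term at level $i$ dominates the positive $\mathcal I_{i+1}$ generated (via Young, using $\lambda\leqslant Z_{i+1}\leqslant\Lambda$) by the diagonal term at level $i$; the gradient-type errors fall into the $a_i\sum_j\int u|B_j(C_iu/u)|^2$ dissipations; the remainders $R_\bullet$ and the bottom level (where $|C_0g|^2\leqslant m\sum_j|B_jg|^2$ lets $\mathcal I_0$ and its errors be absorbed by the entropy dissipation $-\sum_j\int|B_ju|^2/u$) are mopped up; and $C_{N_c+1}=0$ terminates the chain at the top.

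\emph{Weights and conclusion.} I would then choose $a_i(t),\delta_i(t)$ of the form $(\text{small absolute constant depending on }N_c,\Lambda/\lambda,m)\times(1-e^{-t})^{2(N_c-i)}$ times a further power of $(1-e^{-t})$: the vanishing orders at $t=0$ are forced by the requirement that the extra terms $\dot a_i(t)\mathcal I_i$ and $\dot\delta_i(t)\int(C_iu)(C_{i+1}u)/u$ coming from differentiating the weights be dominated by the dissipation produced one level deeper — and this matching is precisely what produces the sharp factor $(1-e^{-t})^{2N_c}$ in the rate, the lowest-weighted level being switched on only at order $t^{2N_c+1}$ near $0$. The $N_c$-fold iteration of Young's inequality, with the $1/\lambda$, $\Lambda^2/\lambda$ and $m$ losses at each step, produces the crude explicit $\kappa$ with its exponent $-20N_c^2$ and constant $100$. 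With these choices the surviving dissipation controls $\sum_{i=0}^{N_c}\mathcal I_i(u_t)$, hence $\rho\int|\na u_t|^2/u_t$ by assumption 5, hence $(\rho/\eta)\,\mathcal E(t)$ by the log-Sobolev inequality for $\mu$; since the weights are bounded in $t$, the same dissipation also controls the auxiliary part of $\mathsf F_t$, so $\tfrac{\dd}{\dd t}\mathsf F_t(u_t)\leqslant-c(t)\,\mathsf F_t(u_t)$ with $c(t)$ of the claimed size, and Grönwall concludes.

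\emph{Main obstacle.} The genuinely delicate point is the simultaneous bookkeeping: choosing the time-dependent hierarchy $a_0\gg\delta_0\gg a_1\gg\cdots\gg a_{N_c}$ and all the Young parameters so that \emph{every} error term is absorbed at once and the leftover negative part still dominates $\mathsf F_t$, while tracking the explicit dependence on $N_c,\lambda,\Lambda,m,\rho,\eta$ — and, in particular, obtaining the sharp vanishing order at $t=0$ rather than a worse power — is the hard part. A routine but necessary technical step, handled by approximation (regularising $f$, or first treating $f$ bounded away from $0$ and $\infty$ with controlled derivatives and passing to the limit, and/or working with a regularised generator), is to justify the differentiations under the integral sign and the integrations by parts for the possibly unbounded derivations $B_0,B_j,C_i$.
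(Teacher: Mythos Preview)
The paper does not prove this theorem: it is quoted verbatim as a black box from \cite{MonmarcheGamma} (``from Theorem 10 of \cite{MonmarcheGamma}'') and then applied in the proof of Theorem~\ref{TheoLine}. There is therefore no proof in the paper to compare against.

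That said, your sketch is a faithful outline of the method actually used in \cite{MonmarcheGamma}: modified entropy with time-dependent directional Fisher informations and mixed terms, the commutator cascade driven by assumption~1 with assumption~2 killing the diffusive cross terms, absorption of remainders via assumptions~3--4, termination by $C_{N_c+1}=0$, closure through assumption~5 and the log-Sobolev inequality, and weights vanishing like powers of $(1-e^{-t})$ to get the sharp short-time factor. Your identification of the ``main obstacle'' --- the simultaneous choice of the hierarchy $a_0\gg\delta_0\gg a_1\gg\cdots$ and Young parameters so that every error is absorbed while tracking the explicit constants --- is exactly where the work lies in \cite{MonmarcheGamma}, and your sketch correctly defers that bookkeeping rather than claiming it is trivial. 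One small point: the sign convention for the cross terms (you write ``for the right sign of $\delta_i$'') and the precise placement of the $\rho/\eta$ factor deserve care when you fill in details, but the architecture is right.
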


\begin{proof}[Proof of Theorem \ref{TheoLine}]
From Theorem~\ref{Theo10Gamma}, the uniform log-Sobolev inequality given by Proposition \ref{Prop-logSob} and the bound on $\|\na^2 U_N\|_\infty$ that is uniform in $N$, the proof of \eqref{Eq:EntropieLine}  is similar to the proof of \cite[Theorem 1]{MonmarcheVFP}. The generator \eqref{EqDefiLN} is on H\"ormander form 
\[B_0 + \sum_{i=1}^N \sum_{j=1}^d B_{i,j} \]
with, writing $y_i = \po y_i^{(1)},\dots,y_i^{(d)}\pf\in\mathbb R^d$,
\begin{eqnarray*}
B_0 &=& -y\cdot\na_x   + \po\na U_N(x)-\gamma y\pf \cdot \na_y   \\
B_{i,j} &=& \frac{\sigma}{\sqrt 2} \partial_{y_i^{(j)}}.
\end{eqnarray*}
 Since
 \[ [B_0,\na_y] =[L_N,\na_y] = \na_x + \gamma\na_y, \hspace{35pt}[B_0,\na_x] =[L_N,\na_x] = - \na_x^2 U_N \na_y,\]
 Theorem \ref{Theo10Gamma} applies with 
\[C_0 = \na_y,\qquad C_1 = \na_x,\qquad R_1 = \gamma \nabla_y,\qquad R_2 = - \na^2 U_N \na_y,\]
  \[Z_1=Z_2=N_c=\lambda=\Lambda=\rho=1,\qquad m= \frac2{\sigma^2 } +  \gamma^2 +\po\|\na^2 V\|_\infty+2\|\na^2 W\|_\infty\pf^2\]
  and $\eta$ given by  Proposition \ref{Prop-logSob}. This gives \eqref{Eq:EntropieLine}.
   
   \bigskip

The second part of Theorem \ref{TheoLine}, namely \eqref{Eq:RegulEntropie}, follows from  \cite{GuillinWang} whose useful (for us) results are gathered in the following proposition.
\begin{prop}[From Corollary 4.7 in \cite{GuillinWang}]
Consider the stochastic differential equation on $\mathbb{R}^m\times\mathbb{R}^d$
$$dX_t=AY_tdt,\qquad dY_t=dB_t+Z_t(X_t,Y_t)dt$$
with initial conditions $(X_0,Y0)=(x,y)$, and associated semigroup $P_t$. Assume
$$|\nabla^xZ(x,y)|\le K_1, \, |\nabla^y Z(x,y)|\le K_2,\qquad (x,y)\in\mathbb{R}^m\times\mathbb{R}^d.$$
Suppose also that $P_t$ has an invariant probability measure $\mu$ and let $P_t^*$ be the adjoint of $P_t$ in $L^2(\mu)$. then for every $t>0$, function $f\ge0$ with $\mu(f)=1$
$$\mu(P_t^*f\log P_t^*f)\le \frac{C}{(1\wedge t)^3} W_2^2(f\mu,\mu)$$
where $C$ only depends on $K_1$ and $K_2$.
\end{prop}

 It is a regularization result, namely a control in small time  of the entropy along the flow of the particles system by the initial Wasserstein distance. Most importantly for us, this regularization has to be independent of the number of particles. Let us check that indeed the constant $C$ obtained does not depend on $N$. Applied to our case, the notations read $A=I_{dN}$ and $Z(x,y) = -\na U_N(x) - \gamma y$. Under Assumption~\ref{Hyp:concret}, the Jacobian matrix of this $Z$ is bounded uniformly in $N$, which means that $K_1$ and $K_2$ do not depend on $N$, and thus neither does $C$, which concludes.

Finally, at least for $t\geqslant 1$, \eqref{Eq:W2Line} is a straightforward consequence of the two previous claims of Theorem \ref{TheoLine} and of the Talagrand $T_2$ inequality implied by the log-Sobolev inequality given by Proposition~\ref{Prop-logSob}. Indeed, for $t\geqslant 1$,
\begin{eqnarray*}
\mathcal W_2^2 \po m_t^{(N)},m_\infty^{(N)}\pf &\leqslant& \eta \mathcal H\po m_t^{(N)}\ |\ m_\infty^{(N)}\pf \\
&\leqslant &\eta Ce^{-\chi(t-1)} \mathcal H\po m_1^{(N)}\ |\ m_\infty^{(N)}\pf\\
& \leqslant & \eta C^2 e^{-\chi t} W_2^2\po m_0^{(N)},m_\infty^{(N)}\pf\,.
\end{eqnarray*}
For $t\in[0,1]$, we simply consider two solutions $Z_N,\tilde Z_N$  of \eqref{EqSystemparticul} driven by the same Brownian motion but with two different initial condition. More precisely, we suppose that $(Z_N(0),\tilde Z_N(0))$ is an $\mathcal W_2$-optimal coupling of $m_0^{(N)}$ and $m_\infty^{(N)}$, so that
\[\mathbb E \po |Z_N(0)-\tilde Z_N(0)|^2\pf \  = \ \mathcal W_2^2\po m_0^{(N)},m_\infty^{(N)}\pf\,.\]
Since $\|\na^2 U_N\|_\infty$ is bounded uniformly in $N$, we immediatly get that 
\[\dd |Z_N(t)-\tilde Z_N(t)|^2 \ \leqslant \ b |Z_N(t)-\tilde Z_N(t)|^2\dd t \]
for some $b>0$ that does not depend on $N$. Conclusion follows from
\[\mathcal W_2^2\po m_t^{(N)},m_\infty^{(N)}\pf \ \leqslant \ \mathbb E \po |Z_N(t)-\tilde Z_N(t)|^2\pf \ \leqslant \ e^{bt} \mathbb E\po |Z_N(0)-\tilde Z_N(0)|^2\pf \,. \]
\end{proof}

%\begin{lem}\label{lem:EntropyM0}
%Under Assumptions \ref{Hyp:concret} and \ref{Hyp:LlogL}, there exist $K$ (depending only on $U,\gamma,\sigma$  and $m_0$) such that for all  $N\in\N_*$ 
%\begin{eqnarray*}
%\mathcal H \po m_0^{(N)}\ |\ m_\infty^{(N)}\pf & \leqslant & KN\,. 
%\end{eqnarray*}
%\end{lem}
%
%The proof is similar to the proof of \cite[Lemma 9]{MonmarcheVFP}.

Let us now transfer the results obtained on the particles system to the nonlinear equation.

\begin{proof}[Proof of Theorem \ref{TheoNonLine}]

In this proof, we use repeatedly results from \cite{GuillinWuZhang} but applied to the potential $H_\beta$ defined in Lemma \ref{Lem:AssuGWZ}. It is possible to do so since, according to Lemma~\ref{Lem:AssuGWZ}, this potential satisfies the assumptions of \cite{GuillinWuZhang} (in particular the condition $c_L\|\na^2_{x,y} W\|_\infty<1$).

The fact that $E_f$ admits a unique minimizer $m_\infty$ over $\mathcal P(\R^{2d})$ is proven in \cite[Lemma 21]{GuillinWuZhang}. Moreover, as established in the proof of \cite[Theorem 10]{GuillinWuZhang}, $\mu_\infty^{(1,N)}$ weakly converges to $m_\infty$ and for all $\nu\in\mathcal P_2(\R^{2d})$,
\[\mathcal W_2^2 (\nu,m_\infty) \ \leqslant \ \liminf_{N\rightarrow+\infty} \frac1N \mathcal W_2^2 \po \nu^{\otimes N},m_\infty^{(N)}\pf\,.\]
Moreover, according to \cite[Lemma 17]{GuillinWuZhang}, for all $\nu\in\mathcal P(\R^{2d})$ such that $\mathcal H(\nu|\alpha)<+\infty$,
\begin{eqnarray}\label{Eq:Ninfty1}
\frac1N \mathcal H\po \nu^{\otimes N}|m_\infty^{(N)}\pf & \underset{{N\rightarrow +\infty}}\longrightarrow & \mathcal H_W(\nu)\,.
\end{eqnarray}
Applied with $\nu = m_\infty$ and combined with the Talagrand's Inequality satisfied by $m_\infty^{(N)}$,
\[\frac1N  \mathcal W_2^2\po m_\infty^{\otimes N},m_\infty^{(N)}\pf \  \leqslant \ \frac\eta N \mathcal H \po m_t^{\otimes N}\ | \ m_\infty^{(N)}\pf \ \underset{N\rightarrow +\infty}\longrightarrow \ \mathcal H_W(m_\infty) \ = \ 0\,.\]
In particular, dividing
\[ \mathcal W_2\po m_0^{\otimes N},m_\infty^{(N)}\pf \ \leqslant \ \sqrt{N}\mathcal W_2\po m_0,m_\infty \pf + \mathcal W_2\po m_\infty^{\otimes N},m_\infty^{(N)}\pf \]
by $\sqrt N$ and letting $N\rightarrow +\infty$ we get that 
\begin{eqnarray}\label{Eq:Ninfty2}
\limsup_{N\rightarrow +\infty} \frac1{\sqrt N} \mathcal W_2\po m_0^{\otimes N},m_\infty^{(N)}\pf  & \leqslant &  \mathcal W_2\po m_0,m_\infty \pf\,.
\end{eqnarray}
Together with Theorem~\ref{TheoLine} and Proposition~\ref{prop:chaosW2}, for all $t\geqslant 0$,
\begin{eqnarray*}
\mathcal W_2 (m_t,m_\infty) & \leqslant & \limsup_{N\rightarrow+\infty} \po \mathcal W_2\po m_t,m_t^{(1,N)}\pf + \frac1{\sqrt N} \mathcal W_2\po m_t^{(N)},m_\infty^{(N)}\pf  + \mathcal W_2\po m_\infty^{(1,N)},m_\infty\pf\pf\\
& \leqslant & C e^{-\chi t} \mathcal W_2\po m_0,m_\infty \pf\,.
\end{eqnarray*}

Similarly, following the proof of \cite[Theorem 10]{GuillinWuZhang} we see that
\begin{eqnarray}\label{Eq:Ninfty3}
\mathcal H_W\po m_t\pf & \leqslant & \liminf_{N\rightarrow+\infty} \frac1N \mathcal H\po m_t^{(N)}\ |\ m_\infty^{(N)}\pf\,.
\end{eqnarray}
The proof of \eqref{Eq:EntropieNonLin} and \eqref{Eq:RegulNonLin} follows then from dividing \eqref{Eq:EntropieLine} and \eqref{Eq:RegulEntropie} by $N$ and letting $N\rightarrow +\infty$ thanks to \eqref{Eq:Ninfty1}, \eqref{Eq:Ninfty2} and \eqref{Eq:Ninfty3}.

\end{proof}

% \begin{eqnarray*}
% \|m_t - m_\infty\|_1^2 & \leqslant & 3 \| m_t - m_t^{(1,N)}\|_1^2 + 3 \|m_t^{(1,N)}-m_\infty{(1,N)}\|_1^2 +3\|m_\infty - m_\infty^{(1,N)}\|_1^2
% \end{eqnarray*}

\subsection{Proofs of the corollaries}
We first need some preliminary lemmas. The first ones gives a control of the propagation of chaos at the level of the invariant measure (so at infinite time).
\begin{lem}\label{lem:propchaosEq}
Under Assumption \ref{Hyp:concret}, there exists $K>0$ such that for all $N\in\N_*$,
\[\mathcal W_2 \po m_\infty^{\otimes N},m_\infty^{(N)}\pf \ \leqslant \ K\,.\]
\end{lem}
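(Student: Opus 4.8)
The plan is to bound $\mathcal W_2^2(m_\infty^{\otimes N}, m_\infty^{(N)})$ uniformly in $N$ by combining the uniform-in-time propagation of chaos estimate of Proposition~\ref{prop:chaosW2} with the uniform-in-time long-time convergence of Theorem~\ref{TheoLine}, letting $t\to+\infty$. More precisely, fix any reference initial law $m_0\in\mathcal P_2(\R^{2d})$ (for concreteness one may take $m_0=\alpha$, the Gaussian-type measure from the statement of Theorem~\ref{TheoNonLine}, or $m_0=m_\infty$ itself), set $m_0^{(N)}=m_0^{\otimes N}$, and run the particle system \eqref{EqSystemparticul} from $m_0^{(N)}$ alongside the nonlinear flow from $m_0$. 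Then by the triangle inequality for $\mathcal W_2$,
\[
\mathcal W_2\po m_\infty^{\otimes N},m_\infty^{(N)}\pf \ \leqslant \ \mathcal W_2\po m_\infty^{\otimes N}, m_t^{\otimes N}\pf + \mathcal W_2\po m_t^{\otimes N}, m_t^{(N)}\pf + \mathcal W_2\po m_t^{(N)}, m_\infty^{(N)}\pf\,.
\]

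For the three terms on the right: the middle term is controlled by Proposition~\ref{prop:chaosW2}, giving $\mathcal W_2^2\po m_t^{\otimes N}, m_t^{(N)}\pf \leqslant K(e^{bt}-1)$, which however blows up in $t$, so $t$ cannot be sent to infinity directly. Instead I would fix a finite time horizon $t=T$ chosen independently of $N$ and argue as follows. The first term, $\mathcal W_2\po m_\infty^{\otimes N}, m_T^{\otimes N}\pf$, equals $\sqrt N\,\mathcal W_2(m_\infty, m_T)$ by tensorization of the quadratic cost, and by Theorem~\ref{TheoNonLine} this is at most $\sqrt N\, C e^{-\chi T}\mathcal W_2(m_0,m_\infty)$. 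The third term, $\mathcal W_2\po m_T^{(N)}, m_\infty^{(N)}\pf$, is controlled by the contraction estimate \eqref{Eq:W2Line} of Theorem~\ref{TheoLine}: it is at most $C e^{-\chi T}\mathcal W_2\po m_0^{(N)}, m_\infty^{(N)}\pf$. The subtlety is that $\mathcal W_2\po m_0^{(N)}, m_\infty^{(N)}\pf$ is itself of order $\sqrt N$ in general (by another triangle inequality, $\mathcal W_2\po m_0^{\otimes N}, m_\infty^{(N)}\pf \leqslant \sqrt N\,\mathcal W_2(m_0,m_\infty) + \mathcal W_2\po m_\infty^{\otimes N}, m_\infty^{(N)}\pf$), so this term also carries a factor comparable to $\sqrt N$ plus the very quantity we are trying to bound.

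Putting these together, writing $D_N := \mathcal W_2\po m_\infty^{\otimes N}, m_\infty^{(N)}\pf$, one obtains an inequality of the shape
\[
D_N \ \leqslant \ C e^{-\chi T}\po \sqrt N\, c_0 + D_N\pf + \sqrt N\, C e^{-\chi T} c_0 + \sqrt{K(e^{bT}-1)}\,,
\]
where $c_0 = \mathcal W_2(m_0,m_\infty)$. Choosing $T$ large but fixed so that $Ce^{-\chi T}\leqslant 1/2$, the $D_N$ term can be absorbed into the left side, leaving $D_N \leqslant 2\sqrt N\, C e^{-\chi T} c_0 + \sqrt{2 c_0 \sqrt N\, C e^{-\chi T}} \cdot(\text{const}) + 2\sqrt{K(e^{bT}-1)}$ — but this still grows like $\sqrt N$, which is not good enough. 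The fix, and the main point, is to use the \emph{normalized} quantity: divide everything by $\sqrt N$ and use \eqref{Eq:Ninfty2} (already established in the proof of Theorem~\ref{TheoNonLine}), namely $\limsup_N \frac1{\sqrt N}\mathcal W_2\po m_0^{\otimes N}, m_\infty^{(N)}\pf \leqslant \mathcal W_2(m_0,m_\infty)$, to control the bad term. Concretely, this shows $\limsup_N \frac1{\sqrt N} D_N = 0$ when one in addition invokes \eqref{Eq:Ninfty1} with $\nu=m_\infty$ and the Talagrand inequality (as done in the proof of Theorem~\ref{TheoNonLine}, where it is shown $\frac1N\mathcal W_2^2(m_\infty^{\otimes N}, m_\infty^{(N)})\to \eta\,\mathcal H_W(m_\infty)=0$). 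That $o(\sqrt N)$ control is not yet the uniform bound $D_N\leqslant K$. To upgrade it, I would feed the $o(\sqrt N)$ information back into the displayed inequality above: since $Ce^{-\chi T}D_N$ is now known to be $o(\sqrt N)$, and the remaining terms are $O(\sqrt N e^{-\chi T}) + O(\sqrt{e^{bT}})$, optimizing $T$ as a function of $N$ (taking $T$ of order $\log N$ so that $\sqrt N e^{-\chi T}$ and $\sqrt{e^{bT}}$ balance at a power $N^{\kappa'}$ with $\kappa'<1/2$) gives $D_N = O(N^{\kappa'})$; iterating this bootstrap drives the exponent down, and once it drops below any fixed positive level one can close to $D_N=O(1)$. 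The main obstacle, and the step requiring care, is precisely this interplay: the contraction in Theorem~\ref{TheoLine} is in $\mathcal W_2$ on $\R^{2dN}$ (hence naturally scales like $\sqrt N$), while the propagation-of-chaos input degrades exponentially in $t$; reconciling them to get a genuinely $N$-independent constant requires choosing the time horizon to grow slowly with $N$ and running the bootstrap, or alternatively — and more cleanly — deriving $D_N=O(1)$ directly by noting that $\frac1N D_N^2 \to \eta\,\mathcal H_W(m_\infty)=0$ already gives $\frac1N D_N^2$ bounded, which is weaker than needed, so the bootstrap via Proposition~\ref{prop:chaosW2} at a well-chosen finite time is really the essential ingredient. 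I expect the cleanest writeup to fix $T$ depending on $N$ at the logarithmic scale from the outset and verify the resulting bound is $O(1)$ in one shot.
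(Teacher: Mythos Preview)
Your proposal contains the right ingredients but misses the one observation that makes the argument work. You yourself list $m_0=m_\infty$ as a possible initial condition; take it. Since $m_\infty$ is an equilibrium of the nonlinear equation \eqref{EqVlasovFP}, with this choice $m_t=m_\infty$ for all $t$, so the first term $\mathcal W_2\po m_\infty^{\otimes N},m_t^{\otimes N}\pf$ in your triangle inequality vanishes identically, and $c_0=\mathcal W_2(m_0,m_\infty)=0$. The inequality then collapses to
\[
D_N \ \leqslant \ \sqrt{K(e^{bT}-1)} + C e^{-\chi T} D_N\,,
\]
and choosing a \emph{fixed} $T$ (independent of $N$) with $Ce^{-\chi T}\leqslant 1/2$ gives $D_N\leqslant 2\sqrt{K(e^{bT}-1)}$, uniformly in $N$. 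This is exactly the paper's proof, in two lines.

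By contrast, the bootstrap you sketch for generic $m_0$ (with $c_0>0$) does not close. After absorption you are left with $D_N\lesssim \sqrt N\,c_0\,e^{-\chi T}+e^{bT/2}$; optimizing $T\sim\log N$ balances these at $N^{b/(2(2\chi+b))}$, a strictly positive power of $N$, and there is nothing to iterate on because the $\sqrt N$ prefactor comes from the fixed constant $c_0$, not from a previous bound on $D_N$. Invoking the fact (from the proof of Theorem~\ref{TheoNonLine}) that $N^{-1}D_N^2\to 0$ only gives $D_N=o(\sqrt N)$, which is too weak to feed back. So the route via a log-in-$N$ time horizon and iteration is a genuine gap; the fix is simply to pick $m_0=m_\infty$ from the start.
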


\begin{proof}
For all $N\in \N_*$ and $t\geqslant 0$,
\[\mathcal W_2 \po m_\infty^{\otimes N}, m_\infty^{(N)}\pf \ \leqslant \ \mathcal W_2 \po m_\infty^{\otimes N}, m_t^{(N)}\pf + \mathcal W_2 \po m_t^{( N)}, m_\infty^{(N)}\pf\,. \]
Applied in the case $m_0^{(N)} = m_\infty^{\otimes N}$ together with Proposition \ref{prop:chaosW2} and Theorem \ref{TheoLine}, this yields
\[\mathcal W_2 \po m_\infty^{\otimes N}, m_\infty^{(N)}\pf \ \leqslant \ K e^{bt} + C e^{-\chi t} \mathcal W_2 \po m_\infty^{\otimes N}, m_\infty^{(N)}\pf\,. \]
In particular, for $t=\ln(2C)/\chi$, we get
\[\mathcal W_2 \po m_\infty^{\otimes N}, m_\infty^{(N)}\pf \ \leqslant \ 2K(2C)^{b/\chi}\,. \]
\end{proof}

\begin{lem}\label{LemCoupleEmpirique}
Let $\nu_1$ and $\nu_2$ be probability laws on $\R^{dN}=\po\R^d\pf^{N}$ which are fixed by any permutation of the  $d$-dimensional coordinates (in other words, if  $(A_{i})_{i\in\cco 1,N\ccf}$ is of law $\nu$, the $A_i$'s are  interchangeable). Let $(A,B)=(A_i,B_i)_{i\in\cco 1,N\ccf}$ be a coupling of $\nu_1$ and $\nu_2$ such that
\[\mathbb E\po |A-B|^2\pf = \mathcal W^2_2(\nu_1,\nu_2).\]
Then
\begin{eqnarray*}
\mathbb E\po \mathcal W^2_2 \po \frac1N\sum_{i=1}^N \delta_{A_i}, \frac1N\sum_{i=1}^N \delta_{B_i}\pf \pf & \leq & \frac{1}{N} \mathcal W^2_2(\nu_1,\nu_2).
\end{eqnarray*} 
\end{lem}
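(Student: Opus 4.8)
The plan is to exhibit a single explicit coupling of the two empirical measures and bound its cost. Given the coupling $(A,B)=(A_i,B_i)_{i\in\cco 1,N\ccf}$ realizing $\mathcal W_2(\nu_1,\nu_2)$, the obvious candidate transference plan between $\frac1N\sum_i\delta_{A_i}$ and $\frac1N\sum_i\delta_{B_i}$ is the "diagonal" one, $\pi=\frac1N\sum_{i=1}^N \delta_{(A_i,B_i)}$, which indeed has the right marginals for every realization of $(A,B)$. Hence, deterministically,
\[
\mathcal W_2^2\po \frac1N\sum_{i=1}^N \delta_{A_i}, \frac1N\sum_{i=1}^N \delta_{B_i}\pf \ \leqslant \ \frac1N \sum_{i=1}^N |A_i-B_i|^2 \ = \ \frac1N |A-B|^2 \,.
\]
Taking expectations and using $\mathbb E\po |A-B|^2\pf = \mathcal W_2^2(\nu_1,\nu_2)$ gives exactly the claimed inequality. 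The interchangeability hypothesis is not even needed for this direction; I would keep it in the statement only because it is the setting in which the lemma is applied (and because the reverse inequality, giving an identity, would use it).

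The one point that deserves a line of justification is why $\pi=\frac1N\sum_i\delta_{(A_i,B_i)}$ is an admissible transference plan: its first marginal is $\frac1N\sum_i\delta_{A_i}$ and its second is $\frac1N\sum_i\delta_{B_i}$ by construction, and it is a probability measure on $\R^d\times\R^d$, so it lies in $\Gamma\po \frac1N\sum_i\delta_{A_i}, \frac1N\sum_i\delta_{B_i}\pf$. Then the definition of $\mathcal W_2$ as an infimum over transference plans yields the pointwise bound displayed above, with cost $\int |a-b|^2\,\pi(\dd a,\dd b)=\frac1N\sum_i|A_i-B_i|^2$.

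I do not expect any real obstacle here: the whole content is the choice of the diagonal coupling, and the rest is the definition of $\mathcal W_2$ together with linearity of expectation. The only thing to be slightly careful about is measurability — that $\omega\mapsto \mathcal W_2^2\po \frac1N\sum_i\delta_{A_i(\omega)}, \frac1N\sum_i\delta_{B_i(\omega)}\pf$ is a genuine random variable so that taking its expectation makes sense — but this is immediate since it is dominated by the measurable function $\frac1N|A-B|^2$ and, being an infimum of continuous functionals of the (random) atoms, is itself measurable.
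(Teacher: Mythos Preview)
Your proof is correct and is essentially the same as the paper's. The paper phrases the diagonal coupling by introducing an index $I$ uniform on $\cco 1,N\ccf$ and noting that, conditionally on $(A,B)$, the pair $(A_I,B_I)$ is a coupling of the two empirical measures with cost $\mathbb E_I\po |A_I-B_I|^2\pf=\frac1N\sum_i|A_i-B_i|^2$; this is exactly your transference plan $\pi=\frac1N\sum_i\delta_{(A_i,B_i)}$ written in a different language, and the remainder of both arguments is identical.
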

\begin{proof}
Let $I$ be uniformly distributed on $\cco 1,N\ccf$. Then $(A_I,B_I)$ is a coupling of $ \frac1N\sum \delta_{A_i}$ and $ \frac1N\sum \delta_{B_i}$, hence
\begin{eqnarray*}
\mathbb E\po \mathcal W^2_2 \po \frac1N\sum_{i=1}^N \delta_{A_i}, \frac1N\sum_{i=1}^N \delta_{B_i}\pf \pf & \leq & \mathbb{E}\po |A_I-B_I|^2\pf\\
& = & \frac1N \mathbb E\po |A-B|^2\pf.
\end{eqnarray*} 
\end{proof}

\begin{proof}[Proof of Corollary \ref{CorPW2}]
Let $(Z_N,\tilde Z_N)$ be a $\mathcal W_2$-optimal coupling of $m_t^{(N)}$ and $m_\infty^{\otimes N}$ and $\tilde M_t^N$ be the empirical distribution of $\tilde Z_N$. Then, using Lemma \ref{LemCoupleEmpirique}, we bound
\begin{eqnarray*}
\mathbb E\po \mathcal W_2^2 \po M_t^N ,m_\infty\pf\pf & \leqslant & 2\mathbb E\po \mathcal W_2^2 \po M_t^N ,\tilde M_t^N \pf \pf + 2\mathbb E\po \mathcal W_2^2 \po \tilde M_t^N ,m_\infty\pf \pf \\
& \leqslant & \frac{2}N \mathcal W_2^2 \po m_t^{(N)} ,m_\infty^{\otimes N}\pf   + 2\mathbb E\po \mathcal W_2^2 \po \tilde M_t^N ,m_\infty\pf \pf %\\
%& \leqslant & \frac{4}N \po \mathcal W_2^2 \po m_t^{(N)} ,m_\infty^{(N)}\pf + \mathcal W_2^2 \po m_\infty^{(N)} ,m_\infty^{\otimes N}\pf\pf  + 2\mathbb E\po \mathcal W_2^2 \po \tilde M_t^N ,m_\infty\pf \pf \\
\end{eqnarray*}
From \cite[Theorem 1]{GuillinFournier},  the second term is bounded by $R a(N)$ for some $R$ independent from $N$, and we bound the first one using Lemma \ref{lem:propchaosEq} and Theorem \ref{TheoLine} as 
 \begin{eqnarray*}
 \mathcal W_2 \po m_t^{(N)} ,m_\infty^{\otimes N}\pf  & \leqslant &  \mathcal W_2 \po m_t^{(N)} ,m_\infty^{(N)}\pf + \mathcal W_2 \po m_\infty^{(N)} ,m_\infty^{\otimes N}\pf \\
 & \leqslant &  Ce^{-\chi t} \mathcal W_2 \po m_0^{(N)} ,m_\infty^{(N)}\pf + K \\
  & \leqslant &  Ce^{-\chi t} \po \mathcal W_2 \po m_0^{\otimes N } ,m_\infty^{\otimes N}\pf + \mathcal W_2 \po m_\infty^{( N) } ,m_\infty^{\otimes N}\pf \pf + K\\
   & \leqslant &  Ce^{-\chi t} \sqrt{N} W_2 \po m_0 ,m_\infty \pf + K(1+C)\\
   & \leqslant & K'(\sqrt N e^{-\chi t} + 1)
 \end{eqnarray*}
 for some $K'$ independent from $N$ and $t$. We have thus obtained 
 \begin{eqnarray*}
\mathbb E\po \mathcal W_2^2 \po M_t^N ,m_\infty\pf\pf & \leqslant &  4(K')^2 \po e^{-2\chi t} + \frac1N\pf + R a(N)\,,
\end{eqnarray*}
and conclusion follows from the fact $1/N$ is neglictible with respect to $a(N)$ as $N\rightarrow +\infty$.
\end{proof}

\begin{proof}[Proof of Corollary \ref{CorChaosUniforme}]
Combining  Proposition \ref{prop:chaosW2} and Lemma \ref{Lem:propchaos-n}, 
\[\mathcal W_2^2\po m_t^{\otimes n},m_t^{(n,N)}\pf \ \leqslant \  \frac{Kn e^{bt}}{N}\]
Besides, combining Theorems \ref{TheoLine} and \ref{TheoNonLine} and Lemma \ref{lem:propchaosEq},
\begin{eqnarray*}
\mathcal W_2\po m_t^{\otimes N},m_t^{(N)} \pf &\leqslant & \mathcal W_2\po m_t^{\otimes N},m_\infty^{\otimes N} \pf + \mathcal W_2\po m_\infty^{\otimes N},m_\infty^{(N)} \pf + \mathcal W_2\po m_\infty^{(N)},m_t^{(N)} \pf \\
& \leqslant& C e^{-\chi t} \po \sqrt N \mathcal W_2(m_0,m_\infty) +   \mathcal W_2\po m_\infty^{(N)},m_0^{(N)} \pf \pf + K\\
& \leqslant& C e^{-\chi t} \po 2 \sqrt N \mathcal W_2(m_0,m_\infty) +   \mathcal W_2\po m_\infty^{(N)},m_\infty^{\otimes N} \pf \pf + K\\
& \leqslant & K' \po \sqrt N e^{-\chi t} + 1\pf
\end{eqnarray*}
for some $K'$ independent from $N$ nor $t\geqslant 0$. Again with Lemma \ref{Lem:propchaos-n}, we have thus obtained that there exists $K''$ independent from $N$ and $t$ such that
\[\mathcal W_2^2\po m_t^{\otimes n},m_t^{(n,N)}\pf \ \leqslant \  K'' n \po \frac{e^{bt}}{N} \wedge \frac{N}{ e^{2\chi t}}\pf \,.\]
Distinguishing the cases $t\leqslant \ln(N)/(2b)$ and $t\geqslant \ln(N)/(2b)$ concludes the proof for the $\mathcal W_2$ distance.

The case of the total variation distance is similar. First, from Pinsker's and Csisz{\'a}r's inequalities, considering the initial condition $m_0^{(N)}=m_\infty^{\otimes N}$, we get for all $t\geqslant 1$
\begin{eqnarray*}
\| m_\infty^{\otimes n} - m_\infty^{(n,N)} \|_{TV}^2 & \leqslant & 2\| m_\infty^{\otimes n} - m_t^{(n,N)} \|_{TV}^2+2\| m_t^{(n,N)} - m_\infty^{(n,N)} \|_{TV}^2 \\
& \leqslant & \frac {8n}N \mathcal H\po m_t^{(N)}|m_t^{\otimes N}\pf + 4\mathcal H\po m_t^{(N)}|m_\infty^{(N)}\pf \\
& \leqslant & \frac {8n}N K' e^{bt} \sqrt{N} + 4 C^2 e^{-\chi (t-1)} \mathcal W_2^2 \po m_0^{(N)},m_\infty^{(N)}\pf
\end{eqnarray*}
for some $K'$, where we combined Propositions \ref{prop:chaosW2} and \ref{prop:chaosEntropie} for the first term and used Theorem \ref{TheoLine} for the second one. Together with Lemma \ref{lem:propchaosEq}, we have obtained that for some $K''$ independent from $N,t,n$, 
\begin{eqnarray*}
\| m_\infty^{\otimes n} - m_\infty^{(n,N)} \|_{TV}^2 & \leqslant & K'' \po  \frac {n}{\sqrt{N}}  e^{bt}   +    e^{-\chi t}  \pf \ \leqslant \ n K'' \po  \frac {1}{\sqrt{N}}  e^{bt}   +    e^{-\chi t}  \pf \ \leqslant \ \frac{K'''n}{N^{\kappa}}
\end{eqnarray*}
for some $\kappa,K'''>0$ when $t= 1+\ln(N)/(4b)$.

Now, considering any initial condition $m_0\in\mathcal P_2(\R^d)$,
\begin{eqnarray*}
\| m_t ^{\otimes n} - m_t^{(n,N)} \|_{TV}^2 & \leqslant & 3\| m_t ^{\otimes n} - m_\infty^{\otimes n} \|_{TV}^2 + 3\| m_\infty ^{\otimes n} - m_\infty^{(n,N)} \|_{TV}^2 + 3\| m_\infty ^{(n,N)} - m_t^{(n,N)} \|_{TV}^2 \\
& \leqslant & 6 \mathcal H \po m_t^{\otimes n}|m_\infty^{\otimes n}\pf  + 3K'''n N^{-\kappa} + 6 \mathcal H\po m_t^{(N)}|m_\infty^{(N)}\pf \\
& \leqslant & 6 C^2 e^{-\chi (t-1)} \po \mathcal W_2^2\po m_0^{\otimes N},m_\infty^{\otimes N}\pf +  \mathcal W_2^2\po m_0^{(N)},m_\infty^{( N)}\pf \pf+  3K'''n N^{-\kappa}
\end{eqnarray*}
for $t\geqslant 1$, so that
\begin{eqnarray*}
\| m_t ^{\otimes n} - m_t^{(n,N)} \|_{TV}^2 & \leqslant & K\po Ne^{-\chi t} +   n N^{-\kappa} \pf \ \leqslant Kn\po Ne^{-\chi t} + N^{-\kappa}\pf
\end{eqnarray*}
for some $K$. Besides, from Propositions \ref{prop:chaosW2} and \ref{prop:chaosEntropie} and Lemma \ref{Lem:propchaos-n},
\begin{eqnarray*}
\| m_t ^{\otimes n} - m_t^{(n,N)} \|_{TV}^2 & \leqslant & \frac{4n}{N} \mathcal H \po m_t^{(N)}|m_t^{\otimes N}\pf \ \leqslant \ \frac{Kn} {\sqrt N} e^{bt}
\end{eqnarray*}
for some $K$, and conclusion follows again by distinguishing the cases $t\geqslant 1+\ln(N)/(4b)$ and $t\leqslant 1+\ln(N)/(4b)$.
\end{proof}

\section*{Acknowledgements}

A. Guillin and P. Monmarch\'e acknowledge financial support from  the French ANR grant EFI (Entropy, flows, inequalities,  ANR-17-CE40-0030).

\bibliographystyle{plain}
\bibliography{biblio}

\begin{thebibliography}{10}

\bibitem{AS}
I.~Aida, S.and~Shigekawa.
\newblock Logarithmic {S}obolev inequalities and spectral gaps: perturbation
  theory.
\newblock {\em J. Funct. Anal.}, 126(2):448--475, 1994.

\bibitem{BakryGentilLedoux}
D.~Bakry, I.~Gentil, and M.~Ledoux.
\newblock {\em Analysis and geometry of {M}arkov diffusion operators}, volume
  348 of {\em Grundlehren der Mathematischen Wissenschaften [Fundamental
  Principles of Mathematical Sciences]}.
\newblock Springer, Cham, 2014.

\bibitem{BaudoinGordinaHerzog}
F.~{Baudoin}, M.~{Gordina}, and D.~P. {Herzog}.
\newblock {Gamma calculus beyond Villani and explicit convergence estimates for
  Langevin dynamics with singular potentials}.
\newblock {\em arXiv e-prints}, page arXiv:1907.03092, Jul 2019.

\bibitem{BolleyGuillinMalrieu}
F.~Bolley, A.~Guillin, and F.~Malrieu.
\newblock Trend to equilibrium and particle approximation for a weakly
  selfconsistent {V}lasov-{F}okker-{P}lanck equation.
\newblock {\em M2AN Math. Model. Numer. Anal.}, 44(5):867--884, 2010.

\bibitem{BolleyGuillinVillani}
F.~Bolley, A.~Guillin, and C.~Villani.
\newblock Quantitative concentration inequalities for empirical measures on
  non-compact spaces.
\newblock {\em Probab. Theory Related Fields}, 137(3-4):541--593, 2007.

\bibitem{Carrillo}
J.~A. Carrillo, R.~J. McCann, and C.~Villani.
\newblock Kinetic equilibration rates for granular media and related equations:
  entropy dissipation and mass transportation estimates.
\newblock {\em Rev. Mat. Iberoamericana}, 19(3):971--1018, 2003.

\bibitem{CGMZ}
P.~Cattiaux, A.~Guillin, P.~Monmarch\'{e}, and C.~Zhang.
\newblock Entropic multipliers method for {L}angevin diffusion and weighted log
  {S}obolev inequalities.
\newblock {\em J. Funct. Anal.}, 277(11):108288, 24, 2019.

\bibitem{Csiszar}
I.~Csisz{\'a}r.
\newblock Sanov property, generalized {$I$}-projection and a conditional limit
  theorem.
\newblock {\em Ann. Probab.}, 12(3):768--793, 1984.

\bibitem{DMS}
J.~Dolbeault, C.~Mouhot, and C.~Schmeiser.
\newblock Hypocoercivity for kinetic equations with linear relaxation terms.
\newblock {\em C. R. Math. Acad. Sci. Paris}, 347(9-10):511--516, 2009.

\bibitem{TugautDuong}
M.~H. Duong and J.~Tugaut.
\newblock Stationary solutions of the {V}lasov--{F}okker--{P}lanck equation:
  {E}xistence, characterization and phase-transition.
\newblock {\em Appl. Math. Lett.}, 52:38--45, 2016.

\bibitem{DEGZ}
A.~Durmus, A.~Eberle, A.~Guillin, and Zimmer R.
\newblock An elementary approach for uniform in time propagation of chaos,
  2019.
\newblock To appear in {\it Proc. Am. Math. Soc.}

\bibitem{EberleGuillinZimmer}
A.~{Eberle}, A.~{Guillin}, and R.~{Zimmer}.
\newblock {Couplings and quantitative contraction rates for Langevin dynamics}.
\newblock {\em arXiv e-prints, to appear in Trans. Am. Math. Soc.}, page
  arXiv:1703.01617, Mar 2017.

\bibitem{GuillinFournier}
N.~Fournier and A.~Guillin.
\newblock On the rate of convergence in {W}asserstein distance of the empirical
  measure.
\newblock {\em Probab. Theory Related Fields}, 162(3-4):707--738, 2015.

\bibitem{GLWZ}
A.~{Guillin}, W.~{Liu}, L.~{Wu}, and C.~{Zhang}.
\newblock {The Kinetic Fokker-planck Equation With Mean Field Interaction}.
\newblock {\em arXiv e-prints}, page arXiv:1912.02594, Sep 2019.

\bibitem{GuillinWuZhang}
A.~{Guillin}, W.~{Liu}, L.~{Wu}, and C.~{Zhang}.
\newblock {Uniform Poincar\'e and logarithmic Sobolev inequalities for mean
  field particles systems}.
\newblock {\em arXiv e-prints}, page arXiv:1909.07051, Sep 2019.

\bibitem{GuillinWang}
A.~Guillin and F.-Y. Wang.
\newblock Degenerate {F}okker-{P}lanck equations: {B}ismut formula, gradient
  estimate and {H}arnack inequality.
\newblock {\em J. Differential Equations}, 253(1):20--40, 2012.

\bibitem{HerauThomann}
F.~{H{\'e}rau} and L.~{Thomann}.
\newblock {On global existence and trend to the equilibrium for the
  Vlasov-Poisson-Fokker-Planck system with exterior confining potential}.
\newblock {\em ArXiv e-prints}, May 2015.

\bibitem{Kac}
M.~Kac.
\newblock Foundations of kinetic theory.
\newblock In {\em Proceedings of the {T}hird {B}erkeley {S}ymposium on
  {M}athematical {S}tatistics and {P}robability, 1954--1955, vol. {III}}, pages
  171--197. University of California Press, Berkeley and Los Angeles, 1956.

\bibitem{Malrieu}
F.~Malrieu.
\newblock Logarithmic {S}obolev inequalities for some nonlinear {PDE}'s.
\newblock {\em Stochastic Process. Appl.}, 95(1):109--132, 2001.

\bibitem{Meleard96}
S.~M{\'e}l{\'e}ard.
\newblock Asymptotic behaviour of some interacting particle systems;
  {M}c{K}ean-{V}lasov and {B}oltzmann models.
\newblock In {\em Probabilistic models for nonlinear partial differential
  equations ({M}ontecatini {T}erme, 1995)}, volume 1627 of {\em Lecture Notes
  in Math.}, pages 42--95. Springer, Berlin, 1996.

\bibitem{MonmarcheVFP}
P.~Monmarch{\'e}.
\newblock Long-time behaviour and propagation of chaos for mean field kinetic
  particles.
\newblock {\em Stochastic Process. Appl.}, 127(6):1721--1737, 2017.

\bibitem{MonmarcheGamma}
P.~{Monmarch{\'e}}.
\newblock {Generalized $\Gamma$ calculus and application to interacting
  particles on a graph}.
\newblock {\em Potential Anal.}, 50:459--466, 2019.

\bibitem{OV}
F.~Otto and C.~Villani.
\newblock Generalization of an inequality by {T}alagrand and links with the
  logarithmic {S}obolev inequality.
\newblock {\em J. Funct. Anal.}, 173(2):361--400, 2000.

\bibitem{Sznitman}
A-S. Sznitman.
\newblock Topics in propagation of chaos.
\newblock In {\em \'{E}cole d'\'{E}t\'{e} de {P}robabilit\'{e}s de
  {S}aint-{F}lour {XIX}---1989}, volume 1464 of {\em Lecture Notes in Math.},
  pages 165--251. Springer, Berlin, 1991.

\bibitem{Talay}
D.~Talay.
\newblock Stochastic {H}amiltonian systems: exponential convergence to the
  invariant measure, and discretization by the implicit {E}uler scheme.
\newblock {\em Markov Process. Related Fields}, 8(2):163--198, 2002.

\bibitem{Villani}
C.~Villani.
\newblock Hypocoercivity.
\newblock {\em Mem. Amer. Math. Soc.}, 202(950):iv+141, 2009.

\bibitem{Zegarlinski}
B.~Zegarli\'{n}ski.
\newblock Dobrushin uniqueness theorem and logarithmic {S}obolev inequalities.
\newblock {\em J. Funct. Anal.}, 105(1):77--111, 1992.

\end{thebibliography}

\end{document}